\documentclass[12pt]{amsart}
\usepackage{amsmath, amssymb}
\usepackage[usenames]{color}

\usepackage[francais,english]{babel}
\usepackage[pdftex]{graphicx,graphics}
\graphicspath{ {pictures/} }

\usepackage[utf8]{inputenc} 
\usepackage[colorlinks,linkcolor = blue]
{hyperref}

\textwidth=6.2in 
\textheight=9in 
\topmargin -.5in
\setlength{\oddsidemargin}{0.22in}
\setlength{\evensidemargin}{0.22in}
\setlength{\headheight}{0.3in}
\setlength{\headsep}{0.4in}
\setlength{\footskip}{0.25in}

\thispagestyle{empty}

\newtheorem{Def}{Definition}[section]
\newtheorem{Thm}{Theorem}[section]
\newtheorem{Pro}{Proposition}[section]
\newtheorem{Lem}{Lemma}[section]

\theoremstyle{definition}
\newtheorem{Remark}{{\bf Remark}}[section]

\newcommand{\R}{\mathbb R}

\pagestyle{plain}


\begin{document}

\title[ A \textit{priori} Lipschitz estimates for solutions of local and nonlocal Hamilton-Jacobi equations with Ornstein-Uhlenbeck operator]
{A \textit{priori} Lipschitz estimates for solutions of local and nonlocal Hamilton-Jacobi equations with Ornstein-Uhlenbeck operator}

\author{Emmanuel Chasseigne, Olivier Ley \& Thi Tuyen Nguyen}
\address{LMPT, Universit\'e Fran\c{c}ois-Rabelais, Tours, France} \email{Emmanuel.Chasseigne@lmpt.univ-tours.fr}
\address{IRMAR, INSA de Rennes \& Universit\'e de Rennes 1, Rennes, France} \email{olivier.ley@insa-rennes.fr,
thi-tuyen.nguyen@univ-rennes1.fr}

\begin{abstract}
  We establish \textit{a priori} Lipschitz estimates for unbounded solutions of second-order
  Hamilton-Jacobi equations in $\R^N$ in presence of an Ornstein-Uhlenbeck drift. We generalize the results obtained by Fujita, Ishii \& Loreti (2006) in several directions. The first one is to
consider more general operators.
We first replace the Laplacian by a general diffusion matrix and then consider
a nonlocal integro-differential operator of fractional Laplacian type. The 
second kind of extension is to deal with more general Hamiltonians which
are merely sublinear. These results are obtained for both degenerate and nondegenerate equations.
\end{abstract}

\subjclass[2016]{Primary 35B45; Secondary 35D40, 35J60, 35R09}
\keywords{Nonlinear partial differential equations, Lipschitz estimates, elliptic equations, integro-partial differential
  equations, Ornstein-Uhlenbeck operator, Hamilton-Jacobi equations}

\date{\today}
\maketitle

\section{Introduction}\label{chapter-1}

We are concerned with \textit{a priori} Lipschitz estimates
for continuous unbounded viscosity solutions of the Hamilton-Jacobi equations
\begin{eqnarray}\label{aperg}
&& \lambda u^\lambda - \mathcal{F}(x,[u^\lambda])
+\langle b,Du^\lambda\rangle + H(x,Du^\lambda )=f(x),
\qquad x\in \R^N, \ \lambda >0,
\end{eqnarray}
and
\begin{eqnarray}\label{cauchy}
  && \begin{cases}
       \displaystyle
       \frac{\partial u}{\partial t} - \mathcal{F}(x,[u]) + \langle b(x), Du\rangle + H(x,Du) = f(x)
       \qquad (x,t)\in \R^N \times (0,\infty),\\
u(\cdot,0) = u_0(\cdot) ~~ \text{in } \R^N,
\end{cases}
\end{eqnarray}
where
$b$ is an {\em Ornstein-Uhlenbeck drift}, i.e., there exists $\alpha>0$ (the strength
of the Ornstein-Uhlenbeck term) such that
\begin{eqnarray}\label{dissipative}
\langle b(x) - b(y), x-y \rangle \geq \alpha |x-y|^2, \qquad x,y\in\R^N,
\end{eqnarray} 
the Hamiltonian $H$ is continuous and {\em sublinear}, i.e., there exists $C_H>0$ such that
\begin{eqnarray} \label{hypH}
|H(x,p)|\leq C_H(1+|p|), \qquad x,p\in\R^N,
\end{eqnarray}
and the operator $\mathcal{F}$ can be either \textit{local}
\begin{eqnarray} \label{F-diffusion}
\mathcal{F}(x,[u]) = {\rm tr}(A(x)D^2u) \qquad \text{(classical diffusion)}
\end{eqnarray}
where $A$ is a nonnegative symmetric matrix, or \textit{nonlocal}
\begin{eqnarray} \label{F-nonlocal}
&& \mathcal{F}(x,[u]) = \int_{\R^N} \{ u(x+z) - u(x) - \langle Du (x), z \rangle \mathbb{I}_{B}(z)\}\nu(dz) \qquad \text{(integro-differential)}.
\end{eqnarray}
\smallskip

More precise assumptions will be given below. In particular, the growth of the datas and the solutions
is a crucial point when considering such equations stated in the whole space $\R^N.$
It is why the expected Lipschitz bounds for the solutions of~\eqref{aperg},~\eqref{cauchy} are
\begin{eqnarray} \label{lip-bounds}
  |u^\lambda(x)-u^\lambda(y)|, \ |u(x,t)-u(y,t)| &\leq &
  C(\phi_\mu(x) + \phi_\mu(y))|x-y|, \qquad x,y\in\R^N,
\end{eqnarray}
providing that the datas $f$ and $u_0$ satisfy the same type of estimates
\begin{eqnarray}\label{fil-f-term}
|g(x) - g(y)| \leq C_g (\phi_\mu(x) + \phi_\mu(y))|x-y|, \qquad \text{$g=f$ or $g=u_0.$}
\end{eqnarray}
The continuous function $\phi_\mu$ takes into account the growth of the datas $f,u_0$ and the
solutions $u^\lambda, u(\cdot,t).$ Let us underline that we are looking for a constant $C$
which is {\em independent} of $\lambda, t>0,$ since our main motivation to establish such kind of bounds
is to apply them to solve some ergodic problems and to study the large time behavior of
the solutions of~\eqref{cauchy}. These issues will be discussed below.
\smallskip

In the particular case of the local equation with a pure Laplacian diffusion
and a Lipschitz continuous Hamiltonian $H$ independent of $x,$
\begin{eqnarray}\label{fil-cauchy}
  \frac{\partial u}{\partial t}      
- \Delta u +  \alpha\langle x, Du\rangle + H(Du) = f(x), 
\end{eqnarray}
Fujita, Ishii \& Loreti \cite{fil06} established the estimates~\eqref{lip-bounds}
for datas $f,u_0$ and solutions $u^\lambda, u(\cdot,t)$ belonging to the class
\begin{eqnarray}\label{fil-class-growth}
\mathcal{E}_\mu =\left\{ g: \R^N \to \R :
\mathop{\rm lim}_{|x|\to +\infty} \frac{g(x)}{\phi_\mu(x)} = 0\right\},
\end{eqnarray}
where
\begin{eqnarray}\label{fil-growth}
\phi_\mu(x) = e^{\mu|x|^2},
\end{eqnarray}
with $\mu <\alpha$, which seems to
be the optimal growth condition when thinking of the
classical heat equation.
\smallskip

The main result of this work is to
prove estimates~\eqref{lip-bounds} for solutions
of the general equations~\eqref{aperg},~\eqref{cauchy}
belonging to $\mathcal{E}_\mu$ for every $\mu >0,$
with
\begin{eqnarray}\label{exp-growth}
\phi_\mu(x) = e^{\mu\sqrt{1+|x|^2}}.
\end{eqnarray}
The Hamiltonian $H$ is continuous and merely sublinear (see~\eqref{hypH})
without further assumptions,
which allows to deal with general Hamiltonians of Bellman-Isacs-type
coming from optimal control and differential games. 
The datas $f,u_0$ satisfy~\eqref{fil-f-term}.
In the local case, the diffusion is anisotropic, the matrix $A$ can be written
$A=\sigma \sigma ^T$ where $\sigma\in W^{1,\infty}(\R^N; \mathcal{M}_N),$
i.e,
\begin{eqnarray}\label{hyp-sig}
  |\sigma(x)|\leq C_\sigma, \quad
  |\sigma(x)-\sigma(y)|\leq L_\sigma|x-y|
  \qquad x,y\in\R^N.
\end{eqnarray}
In the nonlocal case, $\mathcal{F}$ has the form~\eqref{F-nonlocal},
where $\nu$ is a L\'evy type measure, which is regular and nonnegative.
In order that~\eqref{F-nonlocal} is well-defined for our solutions
in $\mathcal{E}_\mu,$
\begin{eqnarray*}\label{non-oper}
  \mathcal{I}(x,\psi,D\psi)
  := \int_{\R^N} \{ \psi(x+z) - \psi(x) - \langle D\psi (x), z \rangle \mathbb{I}_{B}(z)\}\nu(dz)
\end{eqnarray*}
has to be well-defined for any continuous $\psi\in \mathcal{E}_\mu$ which is $C^2$ in
a neighborhood of $x,$ which leads to assume that
\begin{eqnarray}\label{M1}
  \left\{
  \begin{array}{l}
\text{There exists a constant $C^1_\nu > 0$ such that}\\
\displaystyle \int_{B}|z|^2\nu(dz) , \  \int_{B^c} \phi_\mu(z) \nu(dz) \ \leq C^1_\nu.
  \end{array}
  \right.
\end{eqnarray}
An important example of $\nu$ is the \textit{tempered $\beta$-stable law}
\begin{eqnarray}\label{nu}
\nu(dz)= \frac{e^{-\mu|z|}}{|z|^{N+\beta}}dz,
\end{eqnarray}
where $\beta \in (0,2)$ is the {\em order} of the integro-differential operator.
Notice that, in the bounded framework when $\mu$ can be taken equal to 0,
up to a normalizing constant, 
$-\mathcal{I} = (-\Delta)^{\beta/2}$ is the fractional Laplacian of order $\beta$, see \cite{dpv12}.
\smallskip

The restriction of the growth~\eqref{exp-growth} when comparing with~\eqref{fil-growth}
is due to ``bad'' first-order nonlinearities coming 
from the dependence of $H$ with respect to $x$ and the possible anistropy
of the higher-order operators.
These terms are delicate to treat in this unbounded setting. We do not know if the
growth~\eqref{exp-growth} is optimal. 
\smallskip

As far as Lipschitz regularity results are concerned, there is an extensive literature on
the subject. But most of them are local estimates or global estimates
but for bounded solutions in presence of a strongly coercive Hamiltonian.
In the case of a local diffusion,
local Lipschitz estimates for classical solutions are often obtained
via Bernstein's method, see Gilbarg-Trudinger~\cite{gt83} and Barles~\cite{barles91a} for a weak method in the
context of viscosity solutions. For strictly elliptic equations, Ishii-Lions~\cite{il90} developed
a powerful method we use in this work. See also
\cite{lions82, bs01, ls05, clp10, ln16, ln17} and the references therein. 
In the nonlocal setting, an important work is Barles et al.~\cite{bcci12} where Ishii-Lions'
method is extended for bounded solutions to strictly elliptic (in a suitable sense) integro-differential equations
in the whole space. See also~\cite{bcci14, bt16b, bklt15}, and~\cite{blt17}
for an extension of the Bernstein method in the nonlocal case with coercive Hamiltonian.
\smallskip

When working in the whole space with unbounded solutions, one need to recover
some compactness properties. It is the effect of the Ornstein-Uhlenbeck operator term.
In the case $-\Delta + \alpha \langle x, D\rangle,$
in terms of stochastic optimal control, the Laplacian is related to the usual diffusion which spreads
the trajectories while the term $\alpha\langle  x,D\rangle$ tends to confine the trajectories
near the origin allowing to recover some compactness.
From a PDE point of view, this property translates into a supersolution property
for the growth function $\phi_\mu,$ that is, there exists $C,K>0$ such that
\begin{eqnarray}\label{fil-crucial-tool}
  && \mathcal{L}[\phi_\mu](x):=  - \mathcal{F}(x,[\phi_\mu]) + \langle b(x), D\phi_\mu(x)\rangle
  - C|D\phi_\mu(x)| \geq \phi_\mu(x) - K,
\qquad x \in \R^N.
\end{eqnarray}
This property is the crucial tool used in~\cite{fil06} to prove~\eqref{lip-bounds}
and the existence and uniqueness of solutions for~\eqref{fil-cauchy}.
Let us also mention the works of
Bardi-Cesaroni-Ghilli~\cite{bcg15} and Ghilli~\cite{ghilli16} for local equations,
where~\eqref{lip-bounds}
are obtained for constant nondegenerate diffusions and bounded solutions but
for equations with possibly quadratic coercive Hamiltonians.
\smallskip

The totally degenerate case (i.e., without second order term in~\eqref{fil-cauchy})
is investigated in~\cite{fil06b, fl09}. This means  that Lipschitz regularity results
can be obtained without ellipticity in the equation and come directly from
the Ornstein-Uhlenbeck term.
Actually, one can already notice that, in~\cite{fil06}, the ellipticity of $-\Delta$
is used only for being able to work with classical solutions to~\eqref{fil-cauchy}
thanks to Schauder theory and to simplify the proofs.
In our work, contrary to~\cite{fil06, fil06b, fl09}  and due to the more general
equations~\eqref{aperg},~\eqref{cauchy}, non-degeneracy of the equation
is crucial to obtain our estimates~\eqref{lip-bounds}.
However, we present also some estimates for degenerate equations.
\smallskip

More precisely, we call the equations~\eqref{aperg},~\eqref{cauchy} nondegenerate
when
\begin{eqnarray}\label{ellipticite1}
A(x)\geq \rho Id, \quad \text{for some $\rho>0$},
\end{eqnarray}
holds in the local case, which is the usual ellipticity assumption. 
In the nonlocal case, there is no classical definition of ellipticity.
We have then to state one useful for our purpose.
We will work with L\'evy measures $\nu$ 
satisfying~\eqref{M1} and
\begin{eqnarray}\label{M2-assumpt}
  \left\{
\begin{array}{c}
\text{There exists $\beta \in (0,2)$ such that for every $a \in \R^N$ there exist } \\ 
\text{ $0 < \eta < 1$ and $C^2_\nu>0$ such that, for all $\gamma >0,$}\\
\displaystyle \int_{\mathcal{C}_{\eta,\gamma}(a)}|z|^2 \nu(dz) \geq C^2_\nu \eta^{\frac{N-1}{2}}\gamma^{2-\beta},
\end{array}
\right.
\end{eqnarray}
where $\mathcal{C}_{\eta,\gamma}(a) : = \{z \in B_\gamma: (1 - \eta)|z||a| \leq |\langle a,z \rangle| \}$
is the cone illustrated in Figure~\ref{dessin_cone}.
\begin{figure}[ht]
\begin{center}
\includegraphics[width=5cm]{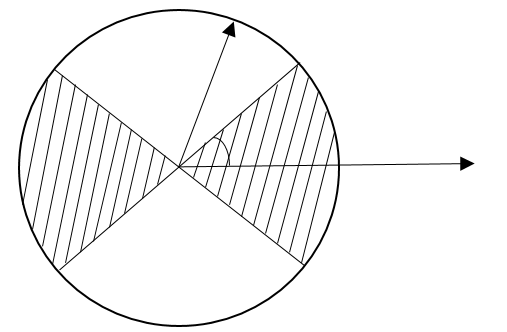}%
  \end{center}
\unitlength=1pt
\begin{picture}(0,0)%
\put(70,65){$\overrightarrow{a}$}
\put(0,112){$\gamma$}
\put(-23,50){$0$}
\put(0,68){$\theta$}
\end{picture}
\caption{\label{dessin_cone}The truncated cone $\mathcal{C}_{\eta,\gamma}(a)$} of axis $a$ and aperture $\theta$ with $\cos \theta = 1 - \eta(a).$
\end{figure}
This assumption, which holds true for the typical example~\eqref{nu}, is a kind of ellipticity condition
which was introduced in~\cite{bcci12} to adapt Ishii-Lions' method to nonlocal integro-differential
equations. In our unbounded case, this ellipticity property is not powerful enough
to control the first-order nonlinearities in the whole range
of order $\beta$ of the integro-differential operator but only
for $\beta\in (1,2)$ (as already noticed in~\cite{bcci12} for instance).
It is why,
 hereafter, to state in a convenient way our results, we will say 
that the nonlocal equation is {\em nondegenerate} when~\eqref{M2-assumpt} holds with $\beta\in\left(1,2\right)$.
\smallskip

Moreover, we also investigate~\eqref{lip-bounds} for solutions of degenerate equations, i.e.,
when the ellipticity condition~\eqref{ellipticite1} does not necessarily hold
and when $\beta\in (0,1].$ To obtain~\eqref{lip-bounds} in this framework,
we need to strengthen the hypotheses on $H$, that is
\begin{eqnarray}
\begin{cases}
\label{hypH2}
\text{ There exist } L_{1H},L_{2H} >0 \text{ such that for all } x,y,p,q\in\R^N \\
 |H(x,p)-H(y,p)|\leq L_{1H}|x-y|(1+|p|), \\
 |H(x,p)-H(x,q)| \leq L_{2H}|p-q|(1+|x|).
\end{cases}
 \end{eqnarray}
This is the classical assumption satisfied by a Hamiltonian coming from an optimal control problem.
\smallskip

Let us emphasize that the
Lipschitz estimates~\eqref{lip-bounds} are independent of $\lambda, t>0.$
The main application is the large time behavior
of the solutions of~\eqref{cauchy}, see~\cite{bs01, fil06, fil06b, fl09, bcci12, bcci14, blt17, ln16, ln17}
for instance. The fact that~\eqref{lip-bounds} is independent of $\lambda$ allows to send
$\lambda$ to 0 in~\eqref{aperg} and to solve the so-called ergodic problem associated
with~\eqref{cauchy}. Estimate~\eqref{lip-bounds} for~\eqref{cauchy} gives a compactness property
for the solution $u$ allowing to study the convergence of $u(x,t)$
as $t\to +\infty$ using the key property~\eqref{fil-crucial-tool} together with a strong maximum principle.
This program is carried out in~\cite{nguyen17}.
\smallskip

We end the introduction by giving a rough idea of the proof of~\eqref{lip-bounds}
in the stationary nondegenerate case.
The goal is to prove that
\begin{eqnarray}\label{max-intro}
&&  \mathop{\rm max}_{x,y\in\R^N} \{ u^\lambda(x)-u^\lambda(y) - \varphi(x,y)\},
  \qquad \text{with } \varphi(x,y)= \psi(|x-y|)(\phi_\mu(x)+\phi_\mu(y)),
\end{eqnarray}
is nonpositive for some concave function $\psi$ as in Figure~\ref{fr:dess_psi}, implying easily~\eqref{lip-bounds}.
\begin{figure}[ht]
\begin{center}
\includegraphics[width=6cm]{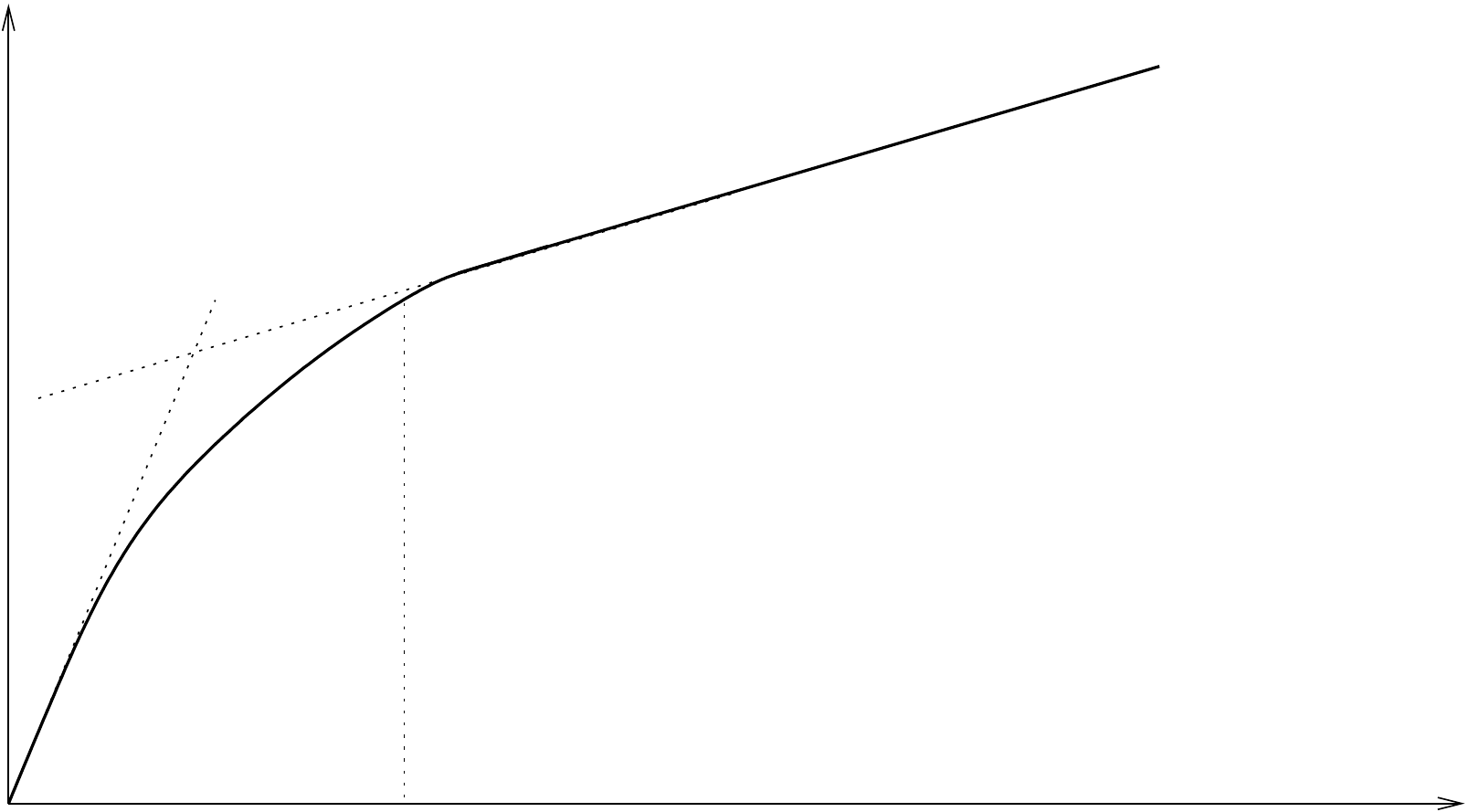}%
\end{center}
\unitlength=1pt
\begin{picture}(0,0)%
\put(-97,107){$\psi$}
\put(-40,10){$r_0$}
\put(88,10){$r$}
\end{picture}
\caption{\label{fr:dess_psi}The concave function $\psi$}
\end{figure}

When writing down the sub/supersolution viscosity inequalities for $u^\lambda$
at $x$ and $y$, we are led to estimate several terms, some of which are ''good''
(the Ornstein-Ulhenbeck effect and the ellipticity of the equation)
while others are ``bad'' (first-order terms coming either from the
heterogeneity of the diffusion or the Hamiltonian). Notice that, when $H$ is Lipschitz
continuous and does not depend on $x$ as in~\cite{fil06}, the bad terms coming from $H$ are avoided.
\smallskip

Then, there are three ingredients that we use to derive estimate \eqref{max-intro}:
$(i)$ the first one consists in using the supersolution $\phi_\mu$ to control the
growth of different terms near infinity;
$(ii)$ for $|x-y|$ small we use the ellipticity of the diffusion
and we control the bad terms via Ishii-Lions' method (see \cite{il90, bcci12} and
Section~\ref{sec-ests}); $(iii)$ for $|x-y|$ big we control those terms by
the Ornstein-Uhlbenbeck drift~\eqref{dissipative}. 
\smallskip

In the degenerate case, we use the same approach, but with further assumptions 
on the datas and with conditions on the strength $\alpha$ of the
drift term to overcome the lack of ellipticity.
In the nonlocal case, although the main ideas are essentially the same, additional
tools and non-trivial adaptations are needed.
\smallskip


The paper is organized as follows. We establish the \textit{priori} Lipschitz estimates for the solutions
of~\eqref{aperg} both in the case of degenerate and nondegenerate equations in Section~\ref{sec-sta}.
The case of the parabolic equation~\eqref{cauchy} is investigated
in Section~\ref{sec-evo}. 
Finally Section~\ref{sec-ests} is devoted to some key estimates for the growth function
and the local and nonlocal operators. 
\smallskip

\noindent{\bf Notations and definitions.}
In the whole paper, $\mathcal{S}_N$ denotes the set of symmetric matrices of size $N$ equipped with
the norm $|A| = (\sum_{1\leq i,j\leq N} a_{ij}^2)^{1/2},$ $B(x,\delta)$ is the open ball of center $x$ and radius $\delta >0$
(written $B_\delta$ if $x=0$) and $B^c(x,\delta)=\R^N\setminus B(x,\delta)$.

Let $T \in (0,\infty)$, we write $Q = \R^N \times (0,\infty),$ $Q_T = \R^N \times[0,T)$
  and introduce the space $\mathcal{E}_\mu(\R^N)=\mathcal{E}_\mu$ (see~\ref{fil-class-growth}) and
\begin{eqnarray*}\label{emu}
\mathcal{E}_\mu(Q_T)&=&\left\{g: Q_T\to\R :
\mathop{\rm lim}_{|x|\to +\infty} \mathop{\rm sup}_{0\leq  t\leq T}\frac{g(x,t)}{\phi_\mu(x)} = 0\right\}.
\end{eqnarray*}
 Recall that $\phi_\mu$
is the growth function defined in~\eqref{exp-growth}. 
Throughout this paper we work with solutions which belong to these classes
and for simplicity, we will write indifferently
$\phi_\mu$ or $\phi$, $\mu>0$ being fixed. Notice that in the local case, we can
take $\mu>0$ arbitrary but in the nonlocal case, $\mu$ has to be chosen
so that $\phi_\mu$ is integrable with respect to the mesure $\nu$ outside some
ball, see \eqref{M1}.

In the whole article, we will deal with viscosity solutions of~\eqref{aperg},~\eqref{cauchy}.
Classical references in the local case are  \cite{cil92, koike04, fs93} and for
nonlocal integro-differential equations, we refer the reader to  \cite{bi08, at96}.
Since the definition is less usual in the nonlocal case, 
we recall it for~\eqref{aperg} (the same kind of definition holds for~\eqref{cauchy}
with easy adaptations). For $0 < \kappa \leq 1,$ we consider
\begin{eqnarray*}
&& \mathcal{I}[B_\kappa](x,u,p) = \int_{|z|\leq \kappa} [u(x+z) - u(x) - \langle p, z \rangle \mathbb{I}_B(z)]\nu(dz)\\
&&  \mathcal{I}[B^\kappa](x,u,p) = \int_{|z|> \kappa} [u(x+z) - u(x) - \langle p, z \rangle \mathbb{I}_B(z)]\nu(dz).
\end{eqnarray*}
\begin{Def}\label{def-vis-non}
  An upper semi-continuous (in short usc) function $u^\lambda \in \mathcal{E}_\mu(\R^N)$ is a subsolution
  of~\eqref{aperg} if for any $\psi \in C^2(\R^N)\cap \mathcal{E}_\mu(\R^N)$ such that $u^\lambda-\psi$ attains a maximum on $B(x,\kappa)$ at $x \in \R^N,$
\begin{eqnarray*}
\lambda u^\lambda (x) - \mathcal{I}[B_\kappa](x,\psi ,p) - \mathcal{I}[B^\kappa](x,u^\lambda ,p)
+\langle b(x),p\rangle + H(x,p)\leq f(x),
\end{eqnarray*}
where $p = D\psi(x)$, $ 0< \kappa \leq 1$.\\
An lower semi-continuous (in short lsc) function $u^\lambda \in \mathcal{E}_\mu(\R^N)$ is a supersolution of \eqref{aperg} if for any $\psi \in C^2(\R^N) \cap\mathcal{E}_\mu(\R^N)$ such that $u^\lambda-\psi$ attains a minimum on $B(x,\kappa)$ at $x \in \R^N$
\begin{eqnarray*}
\lambda u^\lambda (x) - \mathcal{I}[B_\kappa](x,\psi,p) - \mathcal{I}[B^\kappa](x,u^\lambda,p)
+\langle b(x),p\rangle + H(x,p)\geq f(x),
\end{eqnarray*}
where $p = D\psi(x)$, $0< \kappa \leq 1$.\\
Then, $u^\lambda$ is a viscosity solution of \eqref{aperg} if it is both a viscosity subsolution and a viscosity supersolution of \eqref{aperg}.
\end{Def}
%


%
\section{Regularity of solutions for stationary problem}\label{sec-sta}
\subsection{Regularity of solutions for uniformly elliptic equations.}
Recall that \eqref{aperg} is nondegenerate which means that the equation is strictly elliptic in the local case (i.e.,~\eqref{ellipticite1} holds) and \eqref{M2-assumpt} holds with $\beta \in (1,2)$ in the nonlocal one. In such a framework,
we can deal with merely sublinear Hamiltonians, i.e.,~\eqref{hypH} holds without further assumption. 

We state now the main result namely Lipschitz estimates which are uniform with respect to $\lambda >0,$ for the
solutions of \eqref{aperg}.

\begin{Thm}  \label{lip-er}
Let $u^{\lambda} \in C(\R^N)\cap \mathcal{E}_\mu(\R^N)$, $\mu >0,$ be a solution of \eqref{aperg}.  Assume~\eqref{dissipative},~\eqref{hypH} and \eqref{fil-f-term} for $f.$ Suppose in addition one of the following assumptions:
\\
\noindent{(i) $\mathcal{F}(x,[u^\lambda]) = {\rm tr}(A(x) D^2u^\lambda (x))$
and \eqref{hyp-sig}, \eqref{ellipticite1}  hold.}

\noindent{(ii)  $\mathcal{F}(x,[u^\lambda]) = \mathcal{I}(x,u^\lambda,Du^\lambda)$
and suppose that \eqref{M1} and \eqref{M2-assumpt} hold with $\beta \in (1,2)$.
}
\\
Then there exists a constant
$C$ independent of $\lambda$ such that
\begin{eqnarray}\label{estim-lip-u2}
&& |u^\lambda(x)-u^\lambda(y)|\leq C|x-y|(\phi_\mu(x)+\phi_\mu(y)), \quad x,y\in\R^N, \ \lambda \in (0,1).
\end{eqnarray}
\end{Thm}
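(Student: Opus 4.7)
The strategy is a doubling-of-variables argument with a weighted concave test function, combined with the Ornstein--Uhlenbeck dissipativity~\eqref{dissipative}, the Ishii--Lions method, and the supersolution-type inequality~\eqref{fil-crucial-tool} for the growth function $\phi=\phi_\mu$ (whose proof will be given in Section~\ref{sec-ests}).

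Fix a smooth concave function $\psi:[0,\infty)\to[0,\infty)$ with $\psi(r)=r$ on $[0,r_0/2]$ and $\psi'\equiv 0$ past some $r_0>0$, as in Figure~\ref{fr:dess_psi}. For a constant $C>0$ to be chosen large, set
\begin{equation*}
 M:=\sup_{x,y\in\R^N}\Bigl\{u^\lambda(x)-u^\lambda(y)-C\psi(|x-y|)\bigl(\phi(x)+\phi(y)\bigr)\Bigr\}.
\end{equation*}
Since $u^\lambda\in\mathcal{E}_\mu$ and $\psi$ is bounded below by a positive constant outside $B_{r_0}$, the relation $u^\lambda/\phi\to 0$ at infinity forces the supremum to be attained at some $(\bar x,\bar y)$ as soon as $C$ is taken large. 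Proving $M\le 0$ for $C$ large enough, independently of $\lambda\in(0,1)$, yields~\eqref{estim-lip-u2} since $\psi(r)\ge r/2$ for $r$ small. So I assume by contradiction that $M>0$ (hence $\bar x\neq\bar y$), set $r:=|\bar x-\bar y|$, $e:=(\bar x-\bar y)/r$, and look for a contradiction.

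Applying Ishii's lemma (its nonlocal version from~\cite{bi08} in case~(ii)) to $\Psi(x,y):=C\psi(|x-y|)(\phi(x)+\phi(y))$, I split the gradients into a \emph{doubling} part collinear to $e$ with coefficient $C\psi'(r)(\phi(\bar x)+\phi(\bar y))$ and a \emph{weight} part $C\psi(r)D\phi$ at each point. Subtracting the sub/supersolution inequalities and using the dissipativity~\eqref{dissipative} on the doubling part yields, schematically,
\begin{equation*}
\lambda M+\alpha C r\psi'(r)\bigl(\phi(\bar x)+\phi(\bar y)\bigr)+C\psi(r)\bigl\{\mathcal{L}\phi(\bar x)+\mathcal{L}\phi(\bar y)\bigr\}+\mathcal{E}\le C_f r\bigl(\phi(\bar x)+\phi(\bar y)\bigr)+\mathcal{T}_H,
\end{equation*}
where $\mathcal{L}\phi$ is the one-sided operator appearing in~\eqref{fil-crucial-tool}, $\mathcal{E}$ collects the second-order or nonlocal contributions acting on $\psi(|\cdot-\cdot|)$, and $\mathcal{T}_H$ is the Hamiltonian difference, bounded by $C_H(1+|D_x\Psi|+|D_y\Psi|)$ thanks to~\eqref{hypH}. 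Invoking~\eqref{fil-crucial-tool} in both slots turns the weight part into a good lower bound $C\psi(r)(\phi(\bar x)+\phi(\bar y))-2CK\psi(r)$.

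The crucial point is the sign of $\mathcal{E}$: nondegeneracy of $\mathcal{F}$ delivers a strongly negative contribution. In case~(i), Ishii--Lions together with~\eqref{ellipticite1} and~\eqref{hyp-sig} gives $\mathcal{E}\le C\rho\,\psi''(r)(\phi(\bar x)+\phi(\bar y))+\text{l.o.t.}$; in case~(ii), integrating the second difference of $\psi$ along the cones of~\eqref{M2-assumpt} produces a negative contribution of order $C\psi''(r)\,r^{2-\beta}(\phi(\bar x)+\phi(\bar y))$, and the loss of the factor $r^{2-\beta}$ is precisely what forces the restriction $\beta>1$ (see Section~\ref{sec-ests}). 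A trichotomy on $r$ then closes the argument: for $r$ small, $|\psi''(r)|$ is large and the ellipticity term beats $\mathcal{T}_H$ and $C_f r(\phi+\phi)$; for intermediate $r\in(0,r_0)$, the good doubling term $\alpha Cr\psi'(r)(\phi+\phi)$ takes over; for $r\ge r_0$, $\psi'$ vanishes but $\psi$ is a positive constant, and the weight lower bound $C\psi(r)(\phi(\bar x)+\phi(\bar y))$ alone dominates for $C$ large. In each regime this contradicts $M>0$.

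\textbf{Main difficulty.} The exponentially growing weight $\phi$ multiplies every quantity and gets itself differentiated by $\mathcal{F}$, producing cross remainders of the form $C\psi(r)\,\mathrm{tr}(AD^2\phi)$ or $C\psi(r)\,\mathcal{I}[\phi]$ and mixed Hessian terms. Absorbing all of them into the supersolution lower bound~\eqref{fil-crucial-tool} is the content of Section~\ref{sec-ests} and is the real technical core of the argument. In the nonlocal case the matter is further complicated: $\mathcal{I}[\phi]$ is only defined thanks to~\eqref{M1}, and the cone estimate~\eqref{M2-assumpt} yields the decisive $r^{2-\beta}$ factor only when $\beta>1$, which is exactly why $\beta\le 1$ must be excluded here and why~\eqref{hypH} alone (rather than the Lipschitz~\eqref{hypH2}) suffices in the nondegenerate setting.
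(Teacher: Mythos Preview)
Your overall architecture is right (doubling, concave test function weighted by $\phi$, OU-dissipativity plus Ishii--Lions, Lemma~\ref{Lemphi} for the weight), but the concrete choice of $\psi$ you announce is internally inconsistent and actually makes both key steps fail.

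First, you take $\psi(r)=r$ on $[0,r_0/2]$ and then claim that ``for $r$ small, $|\psi''(r)|$ is large''. With $\psi(r)=r$ you have $\psi''\equiv 0$ there, so the Ishii--Lions term $\rho\psi''(r)(\phi+\phi)$ (local) or $\psi''(r)r^{2-\beta}(\phi+\phi)$ (nonlocal) vanishes and cannot absorb the bad first-order terms coming from~\eqref{hypH} and the heterogeneous diffusion. In the paper the function is taken \emph{strictly} concave near $0$ (in the local case $\psi(r)=1-e^{-C_2r}$, satisfying $\psi''+C_2\psi'=0$), precisely so that $-\psi''$ dominates $\psi'$ for small $r$.

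Second, you take $\psi'\equiv 0$ past $r_0$ and plan to close the large-$r$ regime using only the weight inequality $C\psi(r)\mathcal{L}\phi\ge C\psi(r)(\phi-K)$. But with $\psi'\equiv 0$ the OU doubling term $\alpha C r\psi'(r)(\phi+\phi)$ disappears, while the right-hand side still contains $C_f r(\phi(\bar x)+\phi(\bar y))$ with $r=|\bar x-\bar y|$ unbounded a priori. Your weight gain $C\psi(r)(\phi+\phi)=Cc_0(\phi+\phi)$ is independent of $r$ and cannot beat $C_f r(\phi+\phi)$ for large $r$; any attempt to bound $r$ from $M>0$ brings in the modulus of $u$ in $\mathcal{E}_\mu$, so the resulting constant would depend on $u$, not only on the data. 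The paper instead keeps $\psi'(r)\ge \psi'_{\min}>0$ for all $r$ (linear tail), so that the OU term $\alpha\psi'(r)r(\phi+\phi)$ grows linearly and absorbs $C_f r(\phi+\phi)$ once $C_1\alpha\psi'_{\min}\ge 3C_f$.

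Two further points. Your attainment argument is incomplete: without a $\delta$-penalisation (the paper adds $\sqrt\delta+(\psi+\delta)\Phi$ with $\Phi=C_1(\phi(x)+\phi(y)+A)$), a maximising sequence with $|x_n-y_n|\to 0$ and $|x_n|\to\infty$ is not excluded. And in the nonlocal case~(ii) the paper does not obtain Lipschitz in one shot: it first proves $\tau$-H\"older continuity for every $\tau\in(0,1)$ with the exponential-type $\psi$, and then bootstraps to Lipschitz using the polynomial $\psi(r)=r-\varrho r^{1+\theta}$; the H\"older estimate is what handles $r\ge r_0$ directly, while ellipticity via~\eqref{M2-assumpt} is used only for $r\le r_0$. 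Your sketch omits this two-step structure.
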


\begin{proof}[{\bf Proof of Theorem \ref{lip-er}}]\text{ }

\smallskip

\noindent{\it 1. Test-function and maximum point.}
For simplicity, we skip the $\lambda$ superscript for $u^\lambda$ writing $u$ instead and we write $\phi$ for $\phi_\mu$.
Let $\delta, A,C_1 >0,$ $\psi:\R_+ \rightarrow \R_+$ with $\psi(0)=0$ be a $C^2$ concave and increasing function which will be defined later depending on the two different cases.

Consider
\begin{eqnarray}\label{def-sup}
&& M_{\delta,A,C_1}=\mathop{\rm sup}_{x,y\in\R^N}
\left\{
u(x)-u(y)-\sqrt{\delta}-C_1(\psi(|x-y|)+\delta)(\phi(x)+\phi(y)+A)
\right\}
\end{eqnarray}
and set
\begin{eqnarray}
&& \Phi(x,y)= C_1(\phi(x)+\phi(y)+A),~~ \varphi(x,y)=  \sqrt{\delta}+(\psi(|x-y|)+\delta)\Phi(x,y)\label{varphi}.
\end{eqnarray}
All the constants and functions will be chosen to be independent of $\lambda > 0$. 

We will prove that for any fixed $\lambda \in (0,1)$, there exists a $\delta_0(\lambda) >0$ such that for any $0< \delta < \delta_0(\lambda)$, $M_{\delta, A,C_1} \leq 0$.

Indeed, if $M_{\delta,A,C_1} \leq 0$ for some good choice of $A,C_1$,  $\psi$ independent
of $\delta >0,$ then we get \eqref{estim-lip-u2} by letting
$\delta\to 0$ . So we argue by contradiction,
assuming that for $\delta$ small enough $M_{\delta,A,C_1}>0.$
Since $u\in \mathcal{E}_\mu(\R^N)$ and $\delta > 0$, the supremum is achieved
at some point $(x,y)$ with $x \neq y$, thanks to $\delta >0$ and the continuity of $u$.
\smallskip

\noindent{\it 2. Viscosity inequalities.} 
We first compute derivatives of $\varphi$. For the sake of simplicity of notations, we omit $(x,y),$
we write $\psi$, $\Phi$ for $\psi(|x-y|),$ $\Phi(x,y)$ respectively.

Set
\begin{eqnarray} \label{p-C}
p=\frac{x-y}{|x-y|}, \qquad
\mathcal{C}= \frac{1}{|x-y|}(I-p\otimes p).
\end{eqnarray}
We have
\begin{eqnarray}
&& D_x\varphi = \psi' \Phi p + C_1(\psi+\delta)D\phi(x)\label{Dxphi}, \qquad D_y\varphi = - \psi' \Phi p + C_1(\psi+\delta)D\phi(y)\\
&& D^2_{xx}\varphi = \psi'' \Phi p\otimes p
     + \psi' \Phi \mathcal{C} +   C_1\psi' (p\otimes D\phi(x)+D\phi(x)\otimes p)  + C_1(\psi+\delta)D^2\phi(x)\nonumber\label{Dxxvarphi}\\
&& D^2_{yy}\varphi = \psi'' \Phi p\otimes p
     + \psi' \Phi \mathcal{C} -  C_1\psi' (p\otimes D\phi(y)+D\phi(y)\otimes p) + C_1(\psi+\delta)D^2\phi(y)\nonumber\label{Dyyvarphi}\\
&& D^2_{xy}\varphi = -\psi'' \Phi p\otimes p
     - \psi' \Phi \mathcal{C} +   C_1\psi' (D\phi(y)\otimes p - p \otimes D\phi(x))\nonumber\\
&& D^2_{yx}\varphi = -\psi''\Phi p\otimes p
     - \psi' \Phi \mathcal{C} +  C_1\psi' (p\otimes D\phi(y)-D\phi(x)\otimes p).
\nonumber
\end{eqnarray}

Then applying \cite[Theorem 3.2]{cil92} in the local case and \cite[Corollary 1]{bi08} in the nonlocal one we obtain, for any $\zeta >0,$
there exist $X ,Y\in \mathcal{S}^N$ such that 
$(D_x\varphi(x,y) ,X)\in \overline J^{2,+}u (x)$, 
$(-D_y\varphi(x,y), Y) \in \overline J^{2,-}u(y)$
and 
\begin{eqnarray}\label{ineg-matricielle}
\left(\begin{array}{cc}
X &O\\
O&-Y
\end{array}
\right)
\leq 
A + \zeta A^2, ~~ \text{ where}
\end{eqnarray}
\begin{eqnarray*}
A = D^2 \varphi(x,y)&=& 
\psi'' \Phi 
\left(\begin{array}{cc}
p\otimes p & -p\otimes p\\
-p\otimes p & p\otimes p
\end{array}
\right)
+ \psi' \Phi
\left(\begin{array}{cc}
\mathcal{C} & -\mathcal{C}\\
-\mathcal{C} & \mathcal{C}
\end{array}
\right)\\
&&+C_1 \psi' 
\left(\begin{array}{cc}
p\otimes D\phi(x)+D\phi(x)\otimes p & D\phi(y)\otimes p-p\otimes D\phi(x)\\
p\otimes D\phi(y)-D\phi(x)\otimes p   & -(p\otimes D\phi(y)+D\phi(y)\otimes p)
\end{array}
\right)\\
&&+ C_1(\psi+\delta)
\left(\begin{array}{cc}
D^2\phi(x) & 0\\
0 & D^2\phi(y)
\end{array}
\right)
\end{eqnarray*}
and $\zeta A^2 = O(\zeta)$ ($\zeta$ will be sent to $0$ first). 

Writing the viscosity inequality at $(x,y)$ in the local and nonlocal case we have
\begin{eqnarray}\label{ineq-visco}
&& \lambda (u(x)-u(y)) -\mathcal{F}(x,[u])+\mathcal{F}(y,[u]) \\ 
&& +\langle b(x),D_x\varphi\rangle - \langle b(y),-D_y\varphi\rangle + H(x,D_x\varphi) - H(y,-D_y\varphi)\nonumber\\
& \leq& f(x)-f(y),\nonumber
\end{eqnarray}
where $\mathcal{F}(x,[u]) = {\rm tr}(A(x)X)$ and $\mathcal{F}(y,[u]) = {\rm tr}(A(y)Y)$ in the local case and $\mathcal{F}(x,[u]) = \mathcal{I}(x,u,D_x \varphi)$ and $\mathcal{F}(y,[u]) = \mathcal{I}(y,u,-D_y \varphi)$ in the nonlocal one.

We estimate separately the different terms in order to reach a contradiction.
\smallskip

\noindent{\it 3. Monotonicity of the equation with respect to $u$.} 
Using that $M_{\delta, A, C_1}>0,$ we get
\begin{eqnarray} \label{term-en-lambda}
 \lambda (u(x)-u(y))> \lambda\sqrt{\delta}+\lambda (\psi+\delta)\Phi \geq \lambda\sqrt{\delta}.
\end{eqnarray}

\noindent{\it 4. Ornstein-Uhlenbeck-terms.}
From \eqref{dissipative} and \eqref{Dxphi} we have
\begin{eqnarray}\label{estim-b}
&&\langle b(x),D_x\varphi\rangle - \langle b(y),-D_y\varphi\rangle \\
&=& \psi'\Phi\langle b(x)-b(y),p\rangle
+ (\psi+\delta)(\langle b(x),D\phi(x)\rangle  
+\langle b(y), D\phi(y)\rangle)\nonumber\\
&\geq& 
\alpha \psi'\Phi |x-y|+ C_1(\psi+\delta)
(\langle b(x),D\phi(x)\rangle+\langle b(y),D\phi(y)\rangle).\nonumber
\end{eqnarray}

\noindent{\it 5. $H$-terms.} 
From \eqref{hypH} and \eqref{Dxphi}, we have
\begin{eqnarray}\label{estimH}
&& H(x,D_x\varphi) - H(y,-D_y\varphi)\geq   -C_H[2+ 2\psi' \Phi  + C_1(\psi+\delta)(|D\phi(x)| + |D\phi(y)|) ].
\end{eqnarray}

\noindent{\it 6. $f$-terms.} From~\eqref{fil-f-term}, we have
\begin{eqnarray}\label{estimf}
|f(x)-f(y)|\leq C_f(\phi(x)+\phi(y))|x-y|.
\end{eqnarray}
\noindent{\it 7. An estimate for the $\phi$-terms.}
%
To estimate the $\phi$-terms we use the following lemma the proof of which is postponed to Section \ref{sec-ests}.
\begin{Lem}\label{Lemphi}
Let $L_0 >0$, $L(x,y):= L_0(1 + |x| + |y|)$.
Define
\begin{eqnarray}\label{phi}
 \mathcal{L}_L[\phi](x,y) &:=& -\mathcal{F}(x,[\phi]) -\mathcal{F}(y,[\phi])+\langle b(x),D\phi(x)\rangle + \langle b(y),D\phi(y)\rangle\\
&&\qquad  - L(x,y)(|D\phi(x)| + |D\phi(y)|).\nonumber
\end{eqnarray}
There exists a constant $K=K(\alpha,L_0,\mathcal{F}) >0$
such that for any $\alpha > 2L_0,$
\begin{eqnarray}\label{etm-phi}
&& \mathcal{L}_L[\phi](x,y) \geq \phi(x) + \phi(y) - 2K. 
\end{eqnarray}
If $L(x,y) = L_0$, then \eqref{etm-phi} holds for any $\alpha > 0$ and there exists $R = R(\alpha,L_0,\mathcal{F})$ such that
\begin{eqnarray}\label{estim-phi2}
 -\mathcal{F}(x,[\phi])+\langle b(x),D\phi(x)\rangle -  L_0|D\phi(x)|\geq
\left\{
\begin{array}{ll}
- K & \text{for $|x|\leq R,$}\\
 K & \text{for $|x|\geq R.$}
\end{array}
\right. 
\end{eqnarray}
\end{Lem}
\noindent{\it 8. Global estimate from the viscosity inequality \eqref{ineq-visco}.} 
Plugging \eqref{term-en-lambda}, \eqref{estim-b}, \eqref{estimH} and \eqref{estimf} into \eqref{ineq-visco} we obtain
\begin{eqnarray}\label{global-new}
&&\lambda \sqrt{\delta} + \alpha \psi'(|x-y|)|x-y|\Phi(x,y) -\mathcal{F}(x,[u])+\mathcal{F}(y,[u])\\
&\leq& 2C_H \psi'(|x-y|)\Phi(x,y) + 2C_H + C_f(\phi(x) + \phi(y))|x-y|\nonumber\\
&& +  C_1(\psi(|x-y|)+\delta)\left(C_H(|D\phi(x)| + |D\phi(y)|) - \langle b(x),D\phi(x)\rangle -\langle b(y),D\phi(y) \rangle \right).\nonumber
\end{eqnarray}

The goal is now to reach a contradiction in \eqref{global-new}, first in the local case (whole step 9) and then in the nonlocal case (whole step 10).

\medskip
\noindent{\it 9. {\bf Local case:} Hypothesis $(i)$ holds, i.e., $\mathcal{F}(x,[u^\lambda]) = {\rm tr}(A(x) D^2u^\lambda (x))$ and \eqref{hyp-sig},~\eqref{ellipticite1} hold.}

\smallskip

\noindent{\it 9.1. Estimate for second order terms.} We use the following lemma the proof of which is given in Section \ref{sec-local}. 
\begin{Lem} \textbf{(Estimates on $\mathcal{F}$ in the local case).} \label{trace}\\
\noindent{(i) Degenerate case:
Under assumption \eqref{hyp-sig},}
\begin{eqnarray}\label{trace-deg}
&& {\rm - tr}(A(x)X - A(y)Y) \\
&\geq&  - \mathcal{C}_\sigma  |x-y|\psi'(|x-y|) \Phi(x,y) -C_1(\psi(|x-y|) + \delta) \{ {\rm tr}(A(x)D^2\phi(x))
\nonumber\\
&& +  {\rm tr}(A(y)D^2\phi(y))
 + \mathcal{C_\sigma}(|D\phi(x)|+|D\phi(y)|)\} +O(\zeta);\nonumber
\end{eqnarray}
\noindent{(ii) Elliptic case: In addition, if \eqref{ellipticite1} holds, we have}
\begin{eqnarray}\label{ineq-XY}
&&  {\rm - tr}(A(x)X - A(y)Y) \\
&\geq& 
- [4\rho \psi''(|x-y|) + \mathcal{C}_\sigma \psi'(|x-y|)]  \Phi(x,y)  - C_1\mathcal{C}_\sigma\psi'(|x-y|)(|D\phi(x)|+|D\phi(y)|)\nonumber \\
&& -C_1(\psi(|x-y|) + \delta)\{ {\rm tr}(A(x)D^2\phi(x)) +  {\rm tr}(A(y)D^2\phi(y))\nonumber\\
&& + \mathcal{C_\sigma}(|D\phi(x)|+|D\phi(y)|)\} +O(\zeta),\nonumber
\end{eqnarray}
where $ \mathcal{C}_\sigma = \mathcal{C}_\sigma(N, \rho)$ is given by \eqref{Csigma}.
\end{Lem}
%
This Lemma is a crucial tool giving the estimates for the second order terms. The first part is a basic application of Ishii's Lemma (see \cite{cil92}) in an unbounded context with the test function $\varphi$. The second part takes profit of the ellipticity of the equation and allows to apply Ishii-Lions' method (\cite{il90}).

\smallskip

\noindent{\it 9.2. Global estimate from \eqref{global-new}.}
Since \eqref{hyp-sig} and \eqref{ellipticite1} hold, using Lemma \ref{trace} \eqref{ineq-XY} to estimate for the difference of two local terms in \eqref{global-new}, letting $\zeta \to 0$ and using $|D\phi| \leq \mu \phi$  
we obtain
\begin{eqnarray}\label{ineg-contra124}
&& \lambda \sqrt{\delta}
+ C_1 (\phi(x)+\phi(y)+A)\left(-4\rho \psi''(|x-y|)  +\alpha\psi'(|x-y|) |x-y|\right)\\
\nonumber
&\leq& C_1 (\phi(x)+\phi(y)+A)(2C_H+(1+\mu)\mathcal{C}_\sigma)\psi'(|x-y|) + C_f(\phi(x)+\phi(y))|x-y|\\\nonumber
&& +2C_H  - C_1(\psi(|x-y|)+\delta)\mathcal{L}_L[\phi](x,y),\nonumber
\end{eqnarray}
where $\mathcal{L}_L$ is given by~\eqref{phi} with $\mathcal{F}$ introduced by \eqref{F-diffusion} and $L(x,y) = \mathcal{C}_{\sigma} + C_H$. 

The rest of the proof consists in reaching a contradiction in~\eqref{ineg-contra124}
by taking profit of the positive terms in the left hand-side of the inequality.
\smallskip

\noindent{\it 9.3. Construction of the concave test-function $\psi$.}
For $r_0, C_2>0$ to be fixed later, we define the $C^2$ concave
increasing function $\psi: [0,\infty)\to [0,\infty)$ as follows (see Figure \ref{fr:dess_psi})
\begin{eqnarray*}
\psi(r)= 1-e^{-C_2r} \quad \text{for $r\in [0,r_0],$}
\end{eqnarray*}
$\psi(r)$ is linear on $[r_0+1,+ \infty)$ with derivative
$\psi'(r)=C_2 e^{-C_2(r_0+1)},$ and $\psi$ is extended in a smooth
way on $[r_0,r_0+1]$ such that, for all $r\geq 0,$
\begin{eqnarray*}
\psi_{\rm min}':= C_2 e^{-C_2(r_0+1)}\leq \psi'(r)\leq \psi_{\rm max}':=\psi'(0)=C_2.
\end{eqnarray*}
Notice that $\psi$ is chosen such that
\begin{eqnarray}\label{edopsi}
\psi''(r)+C_2 \psi'(r)=0 \quad \text{for $r\in [0,r_0].$}
\end{eqnarray}

\noindent{\it 9.4. Choice of the parameters to reach a contradiction in~\eqref{ineg-contra124}.}
We now fix in a suitable way all the parameters to conclude that~\eqref{ineg-contra124}
cannot hold, which will end the proof. Before rigorous computations, let us explain roughly
the main ideas. We set $r:=|x-y|.$
The function $\psi$ above was chosen to be strictly concave for small $r\leq r_0.$
For such $r$ and for a suitable choice of $r_0$, we will take profit of the ellipticity of the equation, which appears
through the positive term $-4\nu\psi''(r)$ in~\eqref{ineg-contra124}, to control
all the others terms. Since we are in $\R^N$ and we cannot localize anything, $r$
may be large. In this case, the second derivative $\psi''(r)$
of the increasing concave function $\psi$ is small and the ellipticity is not
powerful enough to control the bad terms. Instead, we use the positive
term  $\alpha\psi'(r) r$ coming from the Ornstein-Uhlenbeck operator
to control everything for $r\geq r_0$.

At first, we set
\begin{eqnarray*}\label{defC1}
C_1= \frac{3C_f}{\alpha \psi_{\rm min}'} + 1=  \frac{3C_f e^{C_2 (r_0+1)}}{\alpha C_2} + 1,
\end{eqnarray*}
where $C_2$ and $r_0$ will be chosen later. This choice of $C_1$ is done in order
to get rid of the $f$-terms. Indeed, for every $r\in [0,+\infty),$
\begin{eqnarray}\label{debarasse-term-f}
&& C_1 (\phi(x)+\phi(y)+A)\frac{\alpha}{3}\psi'(r)r
\geq 
\frac{\alpha C_1 \psi_{\rm min}'}{3}(\phi(x)+\phi(y))r
\geq C_f(\phi(x)+\phi(y))r.
\end{eqnarray}

Secondly, we fix $r_0$ which separates the range of the ellipticity
action and the one of the Ornstein-Uhlenbeck term. We fix
\begin{eqnarray}\label{defr0}
r_0= \mathop{\rm max} \left\{\frac{3(2C_H+(1+\mu)\mathcal{C}_\sigma)}{\alpha}, 2R\right\},
\end{eqnarray}
where $R$ comes from \eqref{estim-phi2} with $L(x,y) = \mathcal{C}_\sigma + C_H$.
\smallskip

\noindent{\it 9.5. Contradiction in~\eqref{ineg-contra124} for $r\geq r_0$ thanks to the Ornstein-Uhlenbeck term.}
We assume that $r\geq r_0.$ With the choice of $r_0$ in \eqref{defr0} we have 
\begin{eqnarray*}
\frac{\alpha}{3}\psi'(r) r\geq \frac{\alpha}{3} \psi'(r) r_0\geq (2C_H+(1+\mu)\mathcal{C}_\sigma)\psi'(r).
\end{eqnarray*}

Moreover, $2R\leq r_0\leq r=|x-y|\leq |x|+|y|$ implies that either $|x|\geq R$ or 
$|y|\geq R,$ so by using \eqref{estim-phi2} we have $\mathcal{L}_L[\phi](x,y) \geq K-K\geq 0.$ Therefore, taking into account~\eqref{debarasse-term-f},
inequality~\eqref{ineg-contra124} reduces to
\begin{eqnarray*}
0< \lambda \sqrt{\delta}
+ C_1 (\phi(x)+\phi(y)+A)\frac{\alpha}{3}\psi'(r) r
\leq  2C_H.
\end{eqnarray*}
To obtain a contradiction, it is then sufficient to ensure
\begin{eqnarray}\label{choix1-A}
\frac{C_1A\alpha}{3} \psi_{\rm min}' r_0 \geq 2C_H \text{ which leads to } A\geq \frac{2C_H}{C_f r_0}
\end{eqnarray}
because of the choice of $C_1$ and the value of $\psi_{\rm min}'.$

Finally,~\eqref{ineg-contra124} can not hold for $r\geq r_0$
if $C_1, r_0, A$ are chosen as above. Notice that we did not impose
yet any condition on $C_2>0.$
\smallskip
\noindent{\it 9.6. Contradiction in~\eqref{ineg-contra124} for $r\leq r_0$ thanks to ellipticity.}
One of the main role of the ellipticity is to control the first term in the
right hand-side of~\eqref{ineg-contra124} for small $r.$ More precisely, by
setting
\begin{eqnarray}\label{choixC2-1}
C_2\geq \frac{2C_H+(1+\mu)\mathcal{C}_\sigma}{\rho },
\end{eqnarray}
and using~\eqref{edopsi}, we have
\begin{eqnarray*}
-\rho \psi''(r)\geq (2C_H+(1+\mu)\mathcal{C}_\sigma)\psi'(r).
\end{eqnarray*}

Since both $|x|$ and $|y|$ may be smaller than $R,$ we
cannot estimate $\mathcal{L}_L[\phi](x,y)$ from above in a better
way than $-2K.$ Taking into account~\eqref{debarasse-term-f},
inequality~\eqref{ineg-contra124} reduces to
\begin{eqnarray*}
0< \lambda \sqrt{\delta}
+ C_1 (\phi(x)+\phi(y)+A)(-3 \rho ) \psi''(r)
\leq  C_1(\psi(r)+\delta)2K +2C_H.
\end{eqnarray*}

Using that $\psi(r)\leq 1,$ and $C_1 \geq 1$,
we then increase $A$ from~\eqref{choix1-A} in order that
\begin{eqnarray*}
-3\rho A \psi''(r)\geq 2(K + C_H) \geq 2K \psi(r) + \frac{2C_H}{C_1},
\end{eqnarray*}
which leads to the choice
\begin{eqnarray*}\label{choix2-A}
A\geq \mathop{\rm max} \left\{\frac{2C_H}{C_f r_0} \, , \,  
\frac{2(K+C_H)e^{C_2r_0}}{3\rho C_2^2} \right\}.
\end{eqnarray*}

Notice that, by the choice of $r_0$ in~\eqref{defr0} 
and $C_2$ in~\eqref{choixC2-1},
$A$ and $C_1$ depend only on the datas $\sigma, \rho, b, H,f, \mu$
of the equation.

Finally, inequality~\eqref{ineg-contra124} becomes
\begin{eqnarray*}\label{choicedelta}
0< \lambda \sqrt{\delta}
\leq  2 C_1 K \delta,
\end{eqnarray*}
which is absurd for $\delta$ small enough.
It ensures the claim of Step 9.6.
\smallskip

\noindent{\it 9.7. Conclusion.}
We have proved that $M_{\delta,A,C_1} \leq 0$ in \eqref{def-sup}, for $\delta$ small enough.
It follows that, for every $x,y\in\R^N,$
\begin{eqnarray*}
|u(x)-u(y)|\leq \sqrt{\delta}+(\psi(|x-y|)+\delta)C_1(\phi(x)+\phi(y)+A).
\end{eqnarray*}
Since $A,\psi$ do not depend on $\delta,$ we can let $\delta\to 0$.
Using the fact that $\psi$ is a concave increasing function, we have 
$\psi(r) \leq \psi'(0)r = C_2r$, which leads to
\begin{eqnarray*}
|u(x)-u(y)|\leq C_1C_2 |x-y|(\phi(x)+\phi(y)+A)
= C_1C_2(A+1)|x-y|(\phi(x)+\phi(y)).
\end{eqnarray*}
This ends the proof of the local case $(i)$.

\medskip
\noindent{\it 10. {\bf Nonlocal case:} Hypothesis $(ii)$ holds, i.e.,  $\mathcal{F}(x,[u^\lambda]) = \mathcal{I}(x,u^\lambda, Du^\lambda)$.} In this case, it seems difficult to obtain directly the Lipschitz regularity as in the local case. Instead, we first establish $\tau$-H\"older continuity \textit{for all} $\tau \in (0,1)$ and then improve that H\"older regularity to Lipschitz regularity.
\smallskip

\noindent{\it 10.1. Nonlocal estimates for concave test functions.}
The following proposition will be crucial at several places in the proof
and is an adaptation of~\cite[Prop. 8]{bcci12}.
%
\begin{Pro}\label{nlocalgen} \textbf{(Concave estimates - general nonlocal operators).} Suppose that \eqref{M1} holds.  Let~$u \in C(\R^N)\cap \mathcal{E}_\mu(\R^N)$, we consider
$
\Psi(x,y) = 
u(x)- u(y)- \varphi(x,y), 
$
with $\varphi$ is defined in \eqref{varphi} using a concave function $\psi$. Assume the maximum of $\Psi$ is positive and reached at~$(x,y)$, with~$x \neq y$.
Let $a = x - y$ and choose any $a_0 > 0.$

\noindent{(i) \textbf{(Rough estimate for big $|a|$).}} For all $|a| \geq a_0$,
\begin{eqnarray}\label{degnon}
&& \mathcal{I}(x,u,D_x\varphi) - \mathcal{I}(y,u,-D_y\varphi)
\leq C_1(\psi(|a|) + \delta)\left(\mathcal{I}(x,\phi,D\phi) + \mathcal{I}(y,\phi,D\phi) \right).
\end{eqnarray}
\noindent{(ii) \textbf{(More precise estimate for small $|a|$).}}  For all $|a|\leq a_0$,
\begin{eqnarray}\label{ellnon}
  && \mathcal{I}(x,u,D_x\varphi) - \mathcal{I}(y,u,-D_y\varphi) \\
  \nonumber
  &\leq& C_1(\psi(|a|) + \delta)\left(\mathcal{I}(x,\phi,D\phi) + \mathcal{I}(y,\phi,D\phi) \right)
  + \mu C^1_\nu  \psi'(|a|)\Phi(x,y)\\
\nonumber
&&+\frac{1}{2}\Phi(x,y)  \int_{\mathcal{C}_{\eta, \gamma}(a)} \sup_{|s| \leq 1} \left\{
(1- \tilde{\eta}^2)\frac{\psi'(|a+sz|)}{|a+sz|} + \tilde{\eta}^2 \psi''(|a+sz|)
\right\}|z|^2\nu(dz),
\end{eqnarray}
where $\mathcal{C}_{\eta,\gamma}(a) = \{ z \in B_\gamma: (1- \eta)|z||a| \leq |\langle a,z \rangle|\}$ and $\gamma = \gamma_0|a|,$  $\tilde{\eta} = \frac{1 - \eta - \gamma_0}{1+\gamma_0}>0$ with $\gamma_0 \in (0,1),$ $\eta \in (0,1)$ small enough.
\end{Pro}
The interesting part in the estimate is the negative term $\psi''$.
It is integrated over $\mathcal{C}_{\eta,\gamma}(a),$ which gives a negative term
thanks to  Assumption~\eqref{M2-assumpt}. The magnitude of this ``good'' negative
term depends on the measure $\nu$ and the choice of the concave function $\psi.$
The proof of this Proposition will be given in Section \ref{sec-nonlocal}.
\smallskip

\noindent{\it 10.2. {Establishing $\tau$-H\"older continuity for all $\tau \in (0,1)$.}}
\begin{Pro} \label{holder-lemma} {\bf {(H\"older estimates).}} Suppose that \eqref{M1} and \eqref{M2-assumpt} hold with $\beta \in (1,2)$, consider \eqref{aperg} with $\mathcal{F}(x,[u^\lambda]) = \mathcal{I}(x,u^\lambda,Du^\lambda)$. Let~$\mu > 0$ and  $u^{\lambda} \in C(\R^N)\cap \mathcal{E}_\mu(\R^N)$ be a solution of~\eqref{aperg}.  Assume that~\eqref{dissipative},~\eqref{hypH} and~\eqref{fil-f-term} hold. Then for all $0< \tau <1$, there exists a constant
$C = C_\tau >0$ independent of $\lambda$ such that
\begin{eqnarray}\label{holder-estim-lip-u2}
&& |u^\lambda(x)-u^\lambda(y)|\leq C|x-y|^\tau(\phi(x)+\phi(y)), \quad x,y\in\R^N, \ \lambda \in (0,1).
\end{eqnarray}
\end{Pro}
This result is interesting by itself and can give H\"older regularity in a more slightly general framework than the one needed to establish Lipschitz regularity. It requires to build a specific concave test-function. The proof of this result is postponed after the one of the Theorem.

Notice that $\beta \in (1,2)$ here, see Remark \ref{rm-holder} for more explanation. We now resume the proof from the end of Step 8.
 \smallskip

\noindent{\it 10.3. Construction of the concave test-function $\psi$ to improve H\"older regularity to Lipschitz regularity.}
Let $\theta \in (0, \frac{\beta -1}{N+2-\beta})$, $\varrho = \varrho(\theta)$ be a constant such that $\varrho > \frac{2^{1-\theta}}{\theta}$
and $r_0 = r_0(\theta)>0$ such that
\begin{eqnarray}\label{r-small}
r_0 \leq  \frac{1}{2}
\left(\frac{1}{2\varrho(1+\theta)}\right)^{1/\theta}.
\end{eqnarray}
We then define a $C^2$ concave increasing function $\psi : [0,+ \infty) \rightarrow [0,\infty)$ such that
\begin{eqnarray}\label{psinon}
\psi(r)= r - \varrho r^{1+\theta}  \quad \text{for $r\in [0,r_0],$} 
\end{eqnarray}
$\psi(r)$ is linear on $[2r_0,+ \infty)$ with derivative
$\psi'(r)= 1- \varrho(1+\theta)(2r_0)^\theta,$ and $\psi$ is extended in a smooth
way on $[r_0,2r_0]$ such that, for all $r\geq 0,$
\begin{eqnarray}\label{psi'-Lip}
&& \frac{1}{2} \leq \psi_{\rm min}':= 1- \varrho(1+\theta)(2 r_0)^\theta \leq \psi'(r) \leq 1.
\end{eqnarray}
%

%

We continue the proof. We consider two cases. For $|x-y| = r \geq r_0$, we use the $\tau$-H\"older continuity of $u$ to get a contradiction directly without using the equation. For $|x-y| = r \leq r_0$ with $r_0$ small and fixed in \eqref{r-small}, we use the benefit of the ellipticity which comes from the nonlocal operator combined with the Ornstein-Uhlenbeck term to get a contradiction.
\smallskip

\noindent{\it 10.4. Reaching a contradiction for big $r = |x-y| \geq r_0$.}
Recalli that for all $\delta >0$ small we have
$$u(x) - u(y) > (\psi(r) + \delta)\Phi(x,y).$$
By the concavity of $\psi$ and the $\tau-$H\"older continuity of $u$, we have
\begin{eqnarray*}
&& r\psi'(r)\Phi(x,y) \leq \psi(r)\Phi(x,y) < u(x) - u(y) \leq C_\tau r^\tau(\phi(x) + \phi(y)), ~~ \tau \in (0,1).
\end{eqnarray*}
Since $\Phi(x,y) = C_1(\phi(x)+ \phi(y) + A),$ dividing the above inequalities by $\phi(x) + \phi(y) + A$, we get
$$C_1 \psi'(r) < C_\tau r^{\tau - 1} \leq C_\tau r_0^{\tau -1}, ~~ \tau \in (0,1), ~ r \geq r_0.$$
Moreover, from \eqref{psi'-Lip} we know that $\psi'(r) \geq \psi'_{\min} \geq \frac{1}{
2}$. Thus we only need to fix 
\begin{eqnarray}\label{choix-C1-new}
C_1 \geq 2C_\tau r_0^{\tau -1}.
\end{eqnarray}
Then we get a contradiction for all $r \geq r_0$.
\smallskip

\noindent{\it 10.5. Reaching a contradiction for small $r = |x-y| \leq r_0$.} 
Using the concave test function defined in \eqref{psinon} to Proposition \ref{nlocalgen} we get the following estimate for the difference of two nonlocal terms in \eqref{global-new}.

\begin{Lem}\label{Liplem}
Under the assumptions of Proposition \ref{nlocalgen}, let $\psi$ be defined by~\eqref{psinon}, $a_0 = r_0$ and $r= |x-y|  \leq r_0$. There exist $C(\nu), C^1_\nu > 0$ such that for $\Lambda = \Lambda(\nu) = C(\nu)[\varrho\theta2^{\theta-1}-1]>0$ we have
\begin{eqnarray}\label{ellnon-Lip}
&& \mathcal{I}(x,u,D_x\varphi) - \mathcal{I}(y,u,-D_y\varphi)\\
&\leq& C_1(\psi(r) + \delta)\left(\mathcal{I}(x,\phi,D\phi) + \mathcal{I}(y,\phi,D\phi) \right) - \left(\Lambda r^{-\tilde{\theta}} - \mu C^1_\nu \psi'(r) \right)\Phi(x,y),\nonumber 
\end{eqnarray}
where $\tilde{\theta} = \beta - 1 - \theta(N+2-\beta) > 0$.
\end{Lem}
The proof of this Lemma is given in Section \ref{sec-nonlocal}. 
\smallskip

Using Lemma \ref{Liplem} \eqref{ellnon-Lip}
into \eqref{global-new}, introducing $\mathcal{L}_L$ given by~\eqref{phi} with $L(x,y) = C_H$, $\mathcal{F}$ introduced by \eqref{F-nonlocal} and applying \eqref{estim-phi2},
we obtain
\begin{eqnarray}\label{ineg-contra1}
&& \lambda\sqrt{\delta}
+C_1(\phi(x) + \phi(y) + A)
(\Lambda r^{- \tilde{\theta}} + \alpha\psi'(r)r  )\\ 
&\leq&  C_1(\phi(x) + \phi(y) + A)\bar{C}\psi'(r)   + C_1(\psi(r) + \delta)2K + 2C_H + C_f(\phi(x)+\phi(y))r,\nonumber
\end{eqnarray}
where $\bar{C} = 2C_H  +\mu C^1_\nu,$ $\Lambda = C(\nu)(\varrho\theta 2^{\theta -1} - 1) > 0, ~~ \tilde{\theta} = \beta - 1 - \theta(N + 2 - \beta) > 0.$

We first increase $C_1$ in \eqref{choix-C1-new} as
\begin{eqnarray*}
C_1 = \max \left\{ \frac{C_f}{\alpha \psi'_{\min}} + 1,\  2C_\tau r_0^{\tau -1} \right\}
\end{eqnarray*}
 in order
to get rid of the $f$-terms. Indeed, for every $r\in [0,+\infty),$
\begin{eqnarray}\label{debarasse-term-f.2}
&& C_1 (\phi(x)+\phi(y)+A)\alpha\psi'(r)r
\geq
\alpha C_1 \psi_{\rm min}'(\phi(x)+\phi(y))r
\geq C_f(\phi(x)+\phi(y))r.
\end{eqnarray}
Then taking into account \eqref{debarasse-term-f.2} we get
\begin{eqnarray*}
\lambda \sqrt{\delta} + C_1(\phi(x) + \phi(y) + A) \Lambda r^{- \tilde{\theta}}\leq C_1(\phi(x) + \phi(y) + A)\bar{C}\psi'(r)
  + C_1(\psi(r) + \delta)2K + 2C_H.
\end{eqnarray*}
Moreover, since $\psi(r) \leq r \leq r_0$ and $C_1 \geq 1$, then we fix $A > 0$ in order that
\begin{eqnarray*}
\frac{1}{2}A \Lambda r^{- \tilde{\theta}}\geq \frac{1}{2}A \Lambda r_0^{-\tilde{\theta}} \geq 2(Kr_0 + C_H) \geq 2K\psi(r) + \frac{2C_H}{C_1},
\end{eqnarray*}
which leads to the choice
\begin{eqnarray*}
A \geq  \frac{4(Kr_0 + C_H)r_0^{\tilde{\theta}}}{\Lambda} .
\end{eqnarray*}
Then, inequality \eqref{ineg-contra1} becomes
\begin{eqnarray*}
\lambda \sqrt{\delta} + C_1(\phi(x) + \phi(y) + A)\frac{1}{2} \Lambda r^{- \tilde{\theta}} 
\leq C_1(\phi(x) + \phi(y) + A)\bar{C}\psi'(r)
  + 2KC_1 \delta.
\end{eqnarray*}

Now we fix $r_0$ satisfying \eqref{r-small} such that
$$ r_0 = \min \left\{ \left(\frac{\Lambda}{2\bar{C}}\right)^{1/\tilde{\theta}},\  \frac{1}{2} \left(\frac{1}{2\varrho(1+\theta)}\right)^{1/\theta}\right\}.$$
Then with the definition of $r_0$ as above, we get rid of $\bar{C}\psi'(r)$ term.

Finally, inequality \eqref{ineg-contra1} becomes
$$0 < \lambda \sqrt{\delta} \leq 2C_1 K \delta,$$
which is a contradiction for $\delta$ small enough. 
\smallskip

\noindent{\it 10.6. Conclusion:} We have proved that $M_{\delta,A,C_1} \leq 0$, for $\delta$ small enough.  For every $x,y \in \R^N$, 
$$|u(x) - u(y)| \leq \sqrt{\delta} + (\psi(|x-y|) + \delta)C_1 (\phi(x) + \phi(y) + A).$$
Since $A$ and $\psi$ do not depend on $\delta$, letting $\delta \to 0$ and using the fact that $\psi(r) \leq r$, we get
$$|u(x) - u(y)| \leq C_1|x-y|(\phi(x) + \phi(y) + A) \leq C_1(A+1)|x-y|(\phi(x) + \phi(y)).$$
Since $\phi \geq 1$, \eqref{estim-lip-u2} holds with $C = C_1(A+1).$ This concludes Step 10 and the proof of the Theorem in the nonlocal case $(ii)$.
\end{proof}

\begin{Remark}\label{rm-holder}
 For $\beta \in (0,1]$, the ellipticity  combined with the Ornstein-Uhlenbeck operator seems not powerful enough to control bad terms of the equation as in the local case. Therefore, in this case, we should need an additional condition on the strength $\alpha$ of the Ornstein-Uhlenbeck operator, see Theorem~\ref{lip-er-deg}.
\end{Remark}

\begin{proof}[\bf {Proof of Proposition \ref{holder-lemma} (Establishing $\tau$-H\"older continuity for all $\tau \in (0,1)$).}]
The beginning of the pr\-o\-of follows the lines of Step~1 to Step~8 in the p\-r\-o\-of of Theorem~\ref{lip-er}. We only need to construct a suitable concave increasing function (different from the one of Step~9.3) to get a contradiction in \eqref{global-new}.
\smallskip

\noindent{\it 1. Construction of the concave test-function $\psi$.} For $r_0, C_2>0$ to be fixed later, let $\tau \in (0,1)$ and define the $C^2$ concave
increasing function $\psi: [0,\infty)\to [0,\infty)$ as follows:
\begin{eqnarray}\label{holder-psi}
\psi(r)= 1-e^{-C_2r^\tau} \quad \text{for $r\in [0,r_0],$}
\end{eqnarray}
$\psi(r)$ is linear on $[r_0+1,+ \infty)$ with derivative
$\psi'(r)=C_2\tau (r_0+1)^{\tau-1} e^{-C_2(r_0+1)^{\tau}},$ and $\psi$ is extended in a smooth
way on $[r_0,r_0+1]$ such that, for all $r\geq 0,$
\begin{eqnarray*}\label{psi'}
\psi'(r) \geq \psi_{\rm min}':=C_2\tau (r_0+1)^{\tau-1} e^{-C_2(r_0+1)^{\tau}}.
\end{eqnarray*}

\noindent{\it 2. Global estimate to get a contradiction in \eqref{global-new}.} As in Step 9 of the Theorem, to estimate and reach a contradiction for \eqref{global-new} we need to separate the proof into two cases. For $r=|x-y|$ small we use the ellipticity coming from the nonlocal operator to control the other terms and for $r=|x-y|$ big enough, we can take the benefit of the Ornstein-Uhlenbeck term to control everything.
\smallskip

\noindent{\it 3. Reaching a contradiction in \eqref{global-new} when $|x-y|=r \geq r_0$ for a suitable choice of $r_0$.}
We first take $a_0 = r_0$ in Proposition \ref{nlocalgen} and use \eqref{degnon} to estimate the difference of nonlocal terms in \eqref{global-new}. Then \eqref{global-new} now becomes
\begin{eqnarray*}
&&\lambda\sqrt{\delta}
+\Phi(x,y)
\{\alpha\psi'(r)r - 2C_H\psi' (r) \}- 2{C_H}
+C_1(\psi(r)+\delta)\mathcal{L}_{ L}[\phi](x,y)\\
&\leq&
C_f(\phi(x)+\phi(y))r,\nonumber
\end{eqnarray*}
where $\mathcal{L}_L$ is the operator introduced by \eqref{phi} with  $L(x,y) = C_H,$ $\mathcal{F}$ is the nonlocal operator defined by \eqref{F-nonlocal}.

 We fix all constants in the same way that we did in Step 9 of the Theorem \ref{lip-er}. More precisely, we fix
\begin{eqnarray}
&& r_0 = \max \left\{ \frac{3(2C_H + \mu \hat{C}(\nu))}{\alpha} ; 2R\right\}, \qquad
C_1 = \frac{3C_f}{\alpha \psi'_{\min}} + 1, \qquad A \geq \frac{2C_H}{C_f r_0},\label{holder-choix1-A}
\end{eqnarray} 
where $R$ is a constant coming from \eqref{estim-phi2}. We use these choices of constants and the same arguments as those of Step 9.5  we get that \eqref{global-new} leads to a contradiction.
\smallskip

\noindent{\it 4. Reaching a contradiction in \eqref{global-new} when $|x-y|=r \leq r_0.$} 
In this case, we use the construction of $\psi$ in \eqref{holder-psi} applying to Proposition \ref{nlocalgen} \eqref{ellnon} to estimate the difference of the two nonlocal terms in \eqref{global-new}. This estimate is presented by following Lemma the proof of which is given in Section \ref{sec-nonlocal}.
\begin{Lem}\label{holder-lem}
Under the assumptions of Proposition \ref{nlocalgen}, let $\psi$ be a concave function defined by~\eqref{holder-psi}. Then for $0 < |x-y| = r \leq r_0$, there are constants $C^1_\nu, C(\nu,\tau)>0$ such that
\begin{eqnarray}\label{nonholder}
&&\mathcal{I}(x,u,D_x\varphi) - \mathcal{I}(y,u,-D_y\varphi)\\
&\leq& C_1(\psi(r) + \delta)\left(\mathcal{I}(x,\phi,D\phi) + \mathcal{I}(y,\phi,D\phi) \right)\nonumber\\
&& + \Phi(x,y)\left( \mu C^1_\nu - C(\nu,\tau)(1+C_2r^\tau)r^{1-\beta} \right)\psi'(r).\nonumber 
\end{eqnarray}
\end{Lem}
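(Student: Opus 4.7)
The plan is to specialize the general concave-function estimate \eqref{ellnon} of Proposition \ref{nlocalgen}(ii) to the particular test function $\psi(r)=1-e^{-C_2 r^\tau}$ on $[0,r_0]$. Taking $a_0=r_0$ and $r=|x-y|\leq r_0$, the first two terms on the right-hand side of \eqref{ellnon} already match those appearing in \eqref{nonholder}, so the work lies entirely in bounding the integral
\begin{equation*}
\frac{1}{2}\int_{\mathcal{C}_{\eta,\gamma}(a)}\sup_{|s|\leq 1}\Bigl\{(1-\tilde\eta^2)\frac{\psi'(|a+sz|)}{|a+sz|}+\tilde\eta^2\psi''(|a+sz|)\Bigr\}|z|^2\nu(dz)
\end{equation*}
from above by $-C(\nu,\tau)(1+C_2r^\tau)r^{1-\beta}\psi'(r)$, with $a=x-y$ and $\gamma=\gamma_0 r$.

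Next I would compute $\psi'(r)=C_2\tau r^{\tau-1}e^{-C_2r^\tau}$ and $\psi''(r)=C_2\tau r^{\tau-2}e^{-C_2r^\tau}\bigl[(\tau-1)-C_2\tau r^\tau\bigr]$, so that $\psi'(r)/r$ and $\psi''(r)$ factor through the common scale $C_2\tau r^{\tau-2}e^{-C_2r^\tau}$:
\begin{equation*}
(1-\tilde\eta^2)\frac{\psi'(r)}{r}+\tilde\eta^2\psi''(r)=-C_2\tau r^{\tau-2}e^{-C_2r^\tau}\bigl[\tilde\eta^2(2-\tau)-1+\tilde\eta^2 C_2\tau r^\tau\bigr].
\end{equation*}
Because $z\in B_\gamma$ with $\gamma=\gamma_0 r$ forces $|a+sz|\in[(1-\gamma_0)r,(1+\gamma_0)r]$ for $|s|\leq 1$, the three quantities $\psi'(|a+sz|)/|a+sz|$, $|\psi''(|a+sz|)|$ and $e^{-C_2|a+sz|^\tau}$ are all comparable to their values at $r$ up to constants depending only on $\gamma_0$ and $\tau$, which reduces the analysis of the supremum to the one-point computation above.

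I would then select $\eta$ and $\gamma_0$ small enough (depending on $\tau$) that $\tilde\eta=(1-\eta-\gamma_0)/(1+\gamma_0)$ is close enough to $1$ to guarantee $c_\tau:=\tilde\eta^2(2-\tau)-1>0$; this is the only place where the assumption $\tau<1$ is used, since otherwise no admissible $\tilde\eta<1$ would produce a negative coefficient. Consequently the supremum is bounded above by a strictly negative multiple of $C_2 r^{\tau-2}e^{-C_2r^\tau}(1+C_2r^\tau)$.

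Finally, the ellipticity assumption \eqref{M2-assumpt} applied with $\gamma=\gamma_0 r$ yields
\begin{equation*}
\int_{\mathcal{C}_{\eta,\gamma}(a)}|z|^2\nu(dz)\geq C_\nu^2\eta^{(N-1)/2}\gamma_0^{2-\beta}r^{2-\beta},
\end{equation*}
and multiplying the two bounds produces a negative contribution of order $-C(\nu,\tau)(1+C_2r^\tau)C_2r^{\tau-\beta}e^{-C_2r^\tau}$. The identity $C_2r^{\tau-\beta}e^{-C_2r^\tau}=\tau^{-1}r^{1-\beta}\psi'(r)$ converts this into exactly the advertised factor $-C(\nu,\tau)(1+C_2r^\tau)r^{1-\beta}\psi'(r)\Phi(x,y)$. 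The main obstacle will be the joint calibration of the three small parameters $\eta$, $\gamma_0$, $r_0$: one must enlarge $\tilde\eta$ enough to make $c_\tau>0$ while keeping the "comparable-up-to-constants" estimates uniform in $r\leq r_0$ and ensuring $\mathcal{C}_{\eta,\gamma}(a)\subset B$ so that both \eqref{M1} and \eqref{M2-assumpt} are used in their valid regime, all with constants depending only on $\nu$ and $\tau$.
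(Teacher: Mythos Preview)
Your proposal is correct and follows essentially the same route as the paper: both start from \eqref{ellnon}, compute $\psi',\psi''$ for $\psi(r)=1-e^{-C_2r^\tau}$, observe that the bracket $(1-\tilde\eta^2)\psi'/r+\tilde\eta^2\psi''$ becomes strictly negative once $\tilde\eta^2(2-\tau)>1$, replace $|a+sz|$ by $r$ up to multiplicative constants, and invoke \eqref{M2-assumpt} with $\gamma=\gamma_0 r$ to extract the factor $r^{2-\beta}$.

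The only noteworthy technical difference concerns the exponential factor $e^{-C_2|a+sz|^\tau}$. You use the two-sided bound $|a+sz|\in[(1-\gamma_0)r,(1+\gamma_0)r]$ and claim comparability with $e^{-C_2r^\tau}$ up to constants depending only on $\gamma_0$ and $\tau$; in fact the ratio $e^{-C_2(|a+sz|^\tau-r^\tau)}$ involves $C_2r^\tau$, so those constants also depend on $C_2r_0^\tau$. The paper instead uses the subadditivity $|a+sz|^\tau\le|a|^\tau+|z|^\tau$ (valid since $\tau<1$) to split $e^{-C_2|a+sz|^\tau}\ge e^{-C_2r^\tau}e^{-C_2|z|^\tau}$, absorbing the first factor into $\psi'(r)$ and carrying the second into the cone integral. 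This separation is structurally cleaner, though the residual $e^{-C_2|z|^\tau}$ (bounded below via $|z|\le\gamma_0 r_0$) produces the same dependence in the end. Your closing remark about the joint calibration of $\eta,\gamma_0,r_0$ correctly identifies where this bookkeeping lives.
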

Applying Lemma \ref{holder-lem} \eqref{nonholder}
into \eqref{global-new}, introducing $\mathcal{L}_L$ given by~\eqref{phi} with $L(x,y) = C_H$ and $\mathcal{F}$ defined by \eqref{F-nonlocal},
we obtain
\begin{eqnarray}\label{holder-ineg-contra}
&& \lambda\sqrt{\delta}
+C_1(\phi(x) + \phi(y) + A)
\lbrace C(\nu,\tau)\psi'(r)(1+C_2 r^\tau)r^{1-\beta} + \alpha\psi'(r)r  \rbrace
\\
&\leq&  C_1(\phi(x) + \phi(y) + A)C(H,\nu,\mu)\psi'(r) - C_1(\psi(r) + \delta)\mathcal{L}_L[\phi](x,y) \nonumber\\
&&    + 2C_H + C_f(\phi(x)+\phi(y))r,\nonumber
\end{eqnarray}
where $C(H,\nu,\mu) = 2C_H + \mu C^1_\nu$. Since both $|x|$ and $|y|$ may be smaller than $R,$ we
cannot estimate $\mathcal{L}_L[\phi](x,y)$ from above in a better
way than $-2K.$ Taking into account~\eqref{debarasse-term-f},
inequality~\eqref{holder-ineg-contra} reduces to
\begin{eqnarray*}
&& \lambda\sqrt{\delta}
+C_1(\phi(x) + \phi(y) + A)
 C(\nu,\tau)\psi'(r)(1+C_2 r^\tau)r^{1-\beta}  
\nonumber\\
&\leq&  C_1(\phi(x) + \phi(y) + A)C(H,\nu,\mu)\psi'(r)  + C_1(\psi(r) + \delta)2K + 2C_H.
\end{eqnarray*}
Since $\tau < 1 < \beta$ then $\tau - \beta < 0$ and $r \leq r_0$, using that $\psi(r)\leq 1$ and $C_1 \geq 1$,
we then increase $A$ from~\eqref{holder-choix1-A} in order that
\begin{eqnarray*}
\frac{1}{2} A C(\nu,\tau)\psi'(r)r^{1-\beta} = \frac{1}{2} A C(\nu,\tau)C_2 \tau r^{\tau - \beta} e^{-C_2r^\tau} \geq 2(K + C_H) \geq 2K \psi(r) + \frac{2C_H}{C_1},
\end{eqnarray*}
which leads to the choice
\begin{eqnarray*}\label{holder-choix2-A}
A\geq \mathop{\rm max} \left\{\frac{2C_H}{C_f r_0} \, , \,  
\frac{4(K+C_H)r_0^{\beta-\tau}e^{C_2r_0^\tau}}{C(\nu,\tau) C_2\tau} \right\}.
\end{eqnarray*}
Therefore, inequality \eqref{holder-ineg-contra} now becomes
\begin{eqnarray}\label{holder-final}
&&\lambda \sqrt{\delta} + \frac{1}{2}\Phi(x,y)C(\nu,\tau)\psi'(r)(1+ C_2r^\tau) r^{1-\beta} \leq C(H,\nu,\mu)\psi'(r)\Phi(x,y) + 2C_1 \delta K.
\end{eqnarray}

The rest of the proof is only to fix parameters in order to reach a contradiction in~\eqref{holder-final}. It is now played with the main role of the ellipticity.

Recalling that $\beta > 1$, we fix 
$$r_s = \min \left\{\left(\frac{C(\nu, \tau)}{2C(H,\nu,\mu)}\right)^{\frac{1}{\beta -1}};r_0\right\}; \qquad C_2 = \frac{2C(H,\nu,\mu)}{C(\nu,\tau)}\max \lbrace r_s^{\beta - 1 - \tau};r_0^{\beta - 1 - \tau}\rbrace.$$
If $r\leq r_s$ then from the choice of $r_s$, we have
\begin{eqnarray*}
&& \frac{1}{2}C(\nu,\tau)(1+ C_2r^\tau) r^{1-\beta} \geq \frac{1}{2}C(\nu,\tau)r^{1-\beta} \geq \frac{1}{2}C(\nu,\tau)r_s^{1-\beta} \geq C(H,\nu,\mu).
\end{eqnarray*}
If $r_s \leq r \leq r_0$, we consider two cases
\\
$\bullet$ If $1+\tau - \beta \geq 0$, because of the choice of $C_2$ from above we have
\begin{eqnarray*}
&& \frac{1}{2}C(\nu,\tau)(1+ C_2 r^\tau) r^{1-\beta} \geq \frac{1}{2}C(\nu,\tau)C_2 r^{1+\tau - \beta} \geq \frac{1}{2}C(\nu,\tau)C_2 r_s^{1+\tau - \beta} \geq C(H,\nu,\mu).
\end{eqnarray*}
$\bullet$ If $1+\tau - \beta \leq 0$, because of the choice of $C_2$ from above we have
\begin{eqnarray*}
&& \frac{1}{2}C(\nu,\tau)(1+ C_2 r^\tau) r^{1-\beta} \geq \frac{1}{2}C(\nu,\tau)C_2   r^{1+\tau - \beta} \geq \frac{1}{2}C(\nu,\tau)C_2  r_0^{1+\tau - \beta} \geq C(H,\nu,\mu).
\end{eqnarray*}
Therefore, in any case, due to the choice of the constants, inequality \eqref{holder-ineg-contra} reduces to
$$\lambda \sqrt{\delta} \leq 2C_1 \delta K.$$
This is not possible for $\delta$ small enough. Then we get a contradiction in \eqref{global-new}.
\smallskip

\noindent{\it 5. Conclusion.} For every $x,y \in \R^N$, we have proved that
$$|u(x) - u(y)| \leq \sqrt{\delta} + (\psi(|x-y|)+\delta)C_1(\phi(x) + \phi(y) + A).$$
Since $A, \psi$ do not depend on $\delta$, letting $\delta \to 0$ and recalling that $\psi(r) = 1 - e^{-C_2r^\tau} \leq C_2 r^\tau$, for $\tau \in (0,1)$. Hence we get
$$|u(x) - u(y)| \leq C_1C_2|x-y|^\tau(\phi(x) + \phi(y) + A) = C_1C_2(1+A)|x-y|^\tau(\phi(x) + \phi(y))$$
and finally, \eqref{holder-estim-lip-u2} holds with $C= C_1C_2(1+A)$.
\end{proof}

\subsection{Regularity of solutions for degenerate equations.}

In this section, the equation \eqref{aperg} is degenerate which means that
\eqref{ellipticite1} does not necessarily hold (for the local case) and
$\beta\in(0,1]$ (for the nonlocal one). In these cases, we need to strengthen the assumption on $H$ by assuming that~\eqref{hypH2} holds and require a condition on the strength of the Ornstein-Uhlenbeck operator. 

\begin{Thm}  \label{lip-er-deg}
Let $\mu >0$, $u^{\lambda} \in C(\R^N)\cap \mathcal{E}_\mu(\R^N)$ be a solution of \eqref{aperg}.  Assume that~\eqref{dissipative},~\eqref{fil-f-term} and \eqref{hypH2} hold. If one of the followings holds

\noindent{(i) $\mathcal{F}(x,[u^\lambda]) = {\rm tr}(A(x) D^2u^\lambda (x))$
and \eqref{hyp-sig} holds.}

\noindent{(ii) $\mathcal{F}(x,[u^\lambda]) =
    \mathcal{I}(x,u^\lambda,Du^\lambda)$ and \eqref{M1}, \eqref{M2-assumpt} hold
    with $\beta \in (0,1]$.}
\\
Then there exists 
\begin{eqnarray*}
C(\mathcal{F},H)=
\begin{cases}
C(\sigma,H) ~ \text{in the local case }(i)\\
C(H) ~ \text{in the nonlocal case }(ii)
\end{cases}
\end{eqnarray*}
such that, for any $\alpha > C(\mathcal{F},H)$, there exists a constant
$C$ independent of $\lambda$ such that 
\begin{eqnarray}\label{estim-lip-deg}
&& |u^\lambda(x)-u^\lambda(y)|\leq C|x-y|(\phi_\mu(x)+\phi_\mu(y)), \quad x,y\in\R^N, \ \lambda \in (0,1).
\end{eqnarray}
\end{Thm}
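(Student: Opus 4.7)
The plan is to follow the same doubling-of-variables scheme as in Theorem \ref{lip-er}, using the penalized sup $M_{\delta,A,C_1}$ in \eqref{def-sup} with test-function $\varphi(x,y)=\sqrt{\delta}+(\psi(|x-y|)+\delta)\Phi(x,y)$ and $\Phi=C_1(\phi(x)+\phi(y)+A)$, and to argue by contradiction. The essential difference is that ellipticity is no longer available to absorb bad first-order terms for small $r=|x-y|$: \eqref{ellipticite1} is dropped in the local case, and in the nonlocal one the cone inequality \eqref{M2-assumpt} with $\beta\in(0,1]$ is too weak. The Ornstein-Uhlenbeck drift must therefore dominate all bad terms by itself, and this is made feasible by the stronger Hamiltonian assumption \eqref{hypH2}.

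The key improvement over Theorem \ref{lip-er} is to replace \eqref{estimH} by the sharper decomposition
\begin{align*}
H(x,D_x\varphi)-H(y,-D_y\varphi)&\geq -L_{1H}|x-y|(1+|D_x\varphi|)\\
&\quad-L_{2H}|D_x\varphi+D_y\varphi|(1+|y|),
\end{align*}
where, as in Step~2 of the previous proof, $|D_x\varphi|\leq\psi'\Phi+C_1(\psi+\delta)|D\phi(x)|$ and $D_x\varphi+D_y\varphi=C_1(\psi+\delta)(D\phi(x)+D\phi(y))$. Crucially, the bad term $L_{1H}\psi'(r)r\Phi$ now carries the factor $|x-y|=r$, so it is of the same order as the Ornstein-Uhlenbeck contribution $\alpha\psi'(r)r\Phi$ of \eqref{estim-b}; this is what makes the argument go through without $-\psi''$. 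For the second-order operator I would invoke \eqref{trace-deg} of Lemma \ref{trace} in the local case (which produces another bad term of the form $\mathcal{C}_\sigma\psi'(r)r\Phi$), and the rough nonlocal estimate \eqref{degnon} of Proposition \ref{nlocalgen} in the nonlocal case (which contributes no first-order bad term at all).

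Inserting these bounds into the viscosity inequality yields a variant of \eqref{global-new} of the schematic form
\begin{align*}
\lambda\sqrt{\delta}+(\alpha-\mathcal{C}_\sigma-L_{1H})\psi'(r)r\,\Phi
&\leq L_{1H}r+C_f(\phi(x)+\phi(y))r\\
&\quad-C_1(\psi+\delta)\,\mathcal{L}_L[\phi](x,y),
\end{align*}
where the operator $\mathcal{L}_L$ of \eqref{phi} is applied with the spatially growing weight $L(x,y)=L_0(1+|x|+|y|)$: the growth factor $(1+|y|)$ comes from the $L_{2H}$-term, while the constant part absorbs the remaining $|D\phi|$-coefficients produced by $L_{1H}$, $\mathcal{C}_\sigma$ and $C_H$. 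One has $L_0=L_0(\sigma,H)$ in the local case and $L_0=L_0(H)$ in the nonlocal one, with $\mathcal{C}_\sigma$ dropped. Lemma \ref{Lemphi} then gives $\mathcal{L}_L[\phi]\geq\phi(x)+\phi(y)-2K$ provided $\alpha>2L_0$, which is precisely the threshold $C(\mathcal{F},H)$ appearing in the statement.

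To conclude, I would pick a simple concave test-function, e.g., $\psi(r)=1-e^{-C_2r}$ on $[0,r_0]$ extended linearly on $[r_0+1,\infty)$ as in Step~9.3 of Theorem \ref{lip-er}, with $\psi'\geq\psi'_{\min}>0$; here no large $C_2$ is needed since there is no $-\psi''$-term to calibrate against ellipticity. Then fix $C_1$ large enough to absorb the $f$-term via \eqref{debarasse-term-f}, and choose $r_0$ and $A$ using \eqref{estim-phi2} so that the residuals $L_{1H}r$ and $2C_1K\psi(r)$ coming from $-\mathcal{L}_L[\phi]$ are swallowed by $C_1A(\alpha-\mathcal{C}_\sigma-L_{1H})\psi'_{\min}r$; letting $\delta\to 0$ gives \eqref{estim-lip-deg}. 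The main obstacle is the bookkeeping of constants: one has to verify that $\alpha>2L_0$ and $\alpha>\mathcal{C}_\sigma+L_{1H}$ together yield simultaneously the absorption of the $r\psi'(r)\Phi$-type terms and the nonnegativity of $\mathcal{L}_L[\phi]$. Without the Lipschitz-in-$x$ improvement \eqref{hypH2}, the residual $O(1)$ contributions from $H$ would survive and prevent the drift alone from closing the estimate for small $r$—this is exactly why the strengthened hypothesis on $H$ and the smallness condition on $\alpha^{-1}$ are both necessary.
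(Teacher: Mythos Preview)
Your proposal is correct and follows essentially the same route as the paper: doubling of variables, the sharper $H$-decomposition via \eqref{hypH2} so that the bad first-order term carries a factor $|x-y|$, the degenerate trace estimate \eqref{trace-deg} (resp.\ the rough nonlocal bound \eqref{degnon}), and Lemma~\ref{Lemphi} with the linearly growing weight $L(x,y)=L_0(1+|x|+|y|)$ to produce the threshold $\alpha>2L_0$. The one notable simplification you miss is that the paper takes $\psi(r)=r$ outright: since no $-\psi''$ term is exploited, strict concavity is irrelevant, and with $\psi'\equiv 1$ there is no need for any $r_0$, no small/large $r$ case split, and hence no appeal to \eqref{estim-phi2} (which in any case applies only to constant $L$, whereas here $L$ grows); the whole contradiction is reached in one stroke by choosing $C_1$ and $A$ as in the paper.
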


\begin{proof}[\bf Proof of Theorem \ref{lip-er-deg}]
The beginning of the proof follows line to line from Step $1$ to Step $6$ excepting Step~$5$ (estimate for $H$-terms) as in the proof of Theorem \ref{lip-er}.

The viscosity inequality that we have to estimate in order to get a contradiction is
\begin{eqnarray}\label{ineq-visco-deg}
&& \lambda (u(x)-u(y)) -(\mathcal{F}(x,[u])-\mathcal{F}(y,[u])) \\ 
&& +\langle b(x),D_x\varphi\rangle - \langle b(y),-D_y\varphi\rangle + H(x,D_x\varphi) - H(y,-D_y\varphi)\nonumber\\
& \leq& f(x)-f(y),\nonumber
\end{eqnarray}
where $\varphi$ is defined in \eqref{varphi}, $D_x\varphi$ and $D_y\varphi$ are given by \eqref{Dxphi}, $\mathcal{F}(x,[u]) = {\rm tr}(A(x)X)$ and $\mathcal{F}(y,[u]) = {\rm tr}(A(y)Y)$ in the local case and $\mathcal{F}(x,[u]) = \mathcal{I}(x,u,D_x \varphi)$ and $\mathcal{F}(y,[u]) = \mathcal{I}(y,u,-D_y \varphi)$ in the nonlocal one.

Since we are doing the proof for degenerate equation, we do not need to construct very complicated concave test functions as we built in the one of Theorem \ref{lip-er} in order to get the ellipticity. We only need to take
\begin{eqnarray}\label{deg-psi}
\psi(r) = r, ~~ \forall r \in [0,\infty)
\end{eqnarray}
 in the both local and nonlocal case.

Now using \eqref{hypH2} and \eqref{Dxphi} to estimate for $H$-terms in \eqref{ineq-visco-deg},  we have
\begin{eqnarray}\label{estimH1}
&& H(x,D_x\varphi) - H(y,-D_y\varphi)\\
 &=& H(x,D_x\varphi) - H(x,-D_y\varphi) +  H(x,-D_y\varphi)- H(y,-D_y\varphi)\nonumber\\  
&\geq& -C_1L_H(\psi(|x-y|)+\delta)(|D\phi(x)| + |D\phi(y)|)(1+|x| + |x-y|)\nonumber\\
& & -L_{1H} |x-y| 
 - L_{1H}|x-y|\psi'(|x-y|) \Phi(x,y),\nonumber
\end{eqnarray}
here $L_H = \max\{L_{1H}, L_{2H}\}$.
\smallskip

\noindent{\it 1. Proof of $(i).$} We first use Lemma \ref{trace} \eqref{trace-deg} to estimate the difference of two local terms in \eqref{ineq-visco-deg}. Then plugging~\eqref{term-en-lambda}, \eqref{estim-b}, \eqref{estimf}, \eqref{trace-deg} and  \eqref{estimH1} into \eqref{ineq-visco-deg}, letting $\zeta \to 0$ and using~\eqref{deg-psi}, we obtain
\begin{eqnarray}\label{ineg-contra-deg}
&& \lambda \sqrt{\delta} + C_1\alpha  (\phi(x)+\phi(y)+A)  |x-y|\\\nonumber
&\leq& C_1 (\mathcal{C}_{\sigma} + L_{1H})(\phi(x)+\phi(y)+A)|x-y|  +L_{1H}|x-y| + C_f(\phi(x)+\phi(y))|x-y|\\\nonumber
&& - C_1(|x-y|+\delta)\mathcal{L}_{L}[\phi](x,y),\nonumber
\end{eqnarray}
where $\mathcal{L}_L$ is introduced in \eqref{phi} in the local case with $L(x,y) = 2(\mathcal{C}_\sigma + L_H)(1+|x|+|y|)$. Taking $\alpha > C(\sigma,H):=4(\mathcal{C}_\sigma + L_H)$, applying Lemma \ref{Lemphi} \eqref{etm-phi} and since $L_{1H} \leq L_H$, inequality~\eqref{ineg-contra-deg} reduces to
\begin{eqnarray*}
&& \lambda \sqrt{\delta} + C_1\alpha  (\phi(x)+\phi(y)+A)  |x-y|\\\nonumber
&\leq& C_1 (\mathcal{C}_{\sigma} + L_{H})(\phi(x)+\phi(y)+A)|x-y|\\\nonumber
&& + C_1(|x-y|+\delta)2K +L_{H}|x-y| + C_f(\phi(x)+\phi(y))|x-y|.
\end{eqnarray*}
Now we fix 
\begin{eqnarray*}
C_1 \geq \frac{4C_f}{3\alpha} + 1; \qquad A\geq \frac{4}{3\alpha}(K+L_H).
\end{eqnarray*}
By these choices and noticing that $\psi' = 1$, $C_1 \geq 1$, we obtain
\begin{eqnarray}\label{degf}
&&\frac{3}{4}C_1(\phi(x) + \phi(y) + A) \alpha |x-y| \\
&=& \frac{3}{4}C_1 (\phi(x) + \phi(y)) \alpha |x-y| + \frac{3}{4}C_1 A \alpha |x-y| \nonumber\\
&\geq& C_f(\phi(x) + \phi(y))|x-y| + C_1 K|x-y| + L_H|x-y|.\nonumber
\end{eqnarray}
Taking into account \eqref{degf} and noticing that $\alpha > 4(\mathcal{C}_\sigma + L_H)$, inequality \eqref{ineg-contra-deg} now becomes
$$\lambda \sqrt{\delta} \leq 2C_1K \delta.$$
This is not possible for $\delta$ small enough, hence we reach a contradiction.
\smallskip

\noindent{\it 2. Proof of (ii).} Using Proposition \ref{nlocalgen} \eqref{degnon} to estimate the difference of two nonlocal terms in \eqref{ineq-visco-deg}, then plugging~\eqref{term-en-lambda}, \eqref{estim-b}, \eqref{estimf}, \eqref{degnon} and \eqref{estimH1}  
into \eqref{ineq-visco-deg}, using \eqref{deg-psi} we obtain
\begin{eqnarray*}\label{ineg-contra-deg1}
&& \lambda \sqrt{\delta} + C_1\alpha  (\phi(x)+\phi(y)+A)  |x-y|\\\nonumber
&\leq& C_1 L_{1H}(\phi(x)+\phi(y)+A)|x-y|  +C_{H}|x-y| + C_f(\phi(x)+\phi(y))|x-y|\\\nonumber
&& - C_1(|x-y|+\delta)\mathcal{L}_{L}[\phi](x,y),\nonumber
\end{eqnarray*}
where $\mathcal{L}_L$ is introduced in \eqref{phi} with $\mathcal{F}$ is the nonlocal operator defined by \eqref{F-nonlocal} and $L(x,y)=2L_H(1+|x|+|y|).$ Taking $\alpha > 4L_H $ and using the same arguments as in the local case  $(i)$  we get also a contradiction.
\smallskip

\noindent{\it 3. Conclusion.}
 For any $\alpha > C(\mathcal{F},H)=
\begin{cases}
C(\sigma,H) ~ \text{in the local case }(i)\\
C(H) ~ \text{in the nonlocal case }(ii)
\end{cases},$
we have proved that $M\leq 0$ in both the local and the nonlocal case.
It follows that, for every $x,y\in\R^N,$
\begin{eqnarray*}
|u(x)-u(y)|\leq \sqrt{\delta}+(\psi(|x-y|)+\delta)C_1(\phi(x)+\phi(y)+A).
\end{eqnarray*}
Since $A,\psi$ do not depend on $\delta,$ letting $\delta\to 0$
we get
\begin{eqnarray*}
&& |u(x)-u(y)|\leq C_1 |x-y|(\phi(x)+\phi(y)+A)
= C_1(A+1)|x-y|(\phi(x)+\phi(y)).
\end{eqnarray*}
Here $\phi$ stands for $\phi_\mu$ and since $\phi\geq 1,$ \eqref{estim-lip-deg} holds with
$C=C_1(A+1).$
\end{proof}

\begin{Remark}\label{rm-deg-1}
 If $\sigma$ is a constant matrix, i.e., $L_\sigma = 0$ in \eqref{hyp-sig} then \eqref{trace-deg} reduces to
\begin{eqnarray*}\label{ifl-trace}
 && -{\rm tr}(A(x)X - A(y)Y) 
\geq  -C_1(\psi + \delta) ({\rm tr}(A(x)D^2\phi(x)) + {\rm tr}(A(y)D^2\phi(y)))+O(\zeta).
\end{eqnarray*}

Therefore using similarly arguments with the proof of the theorem we can prove that~\eqref{estim-lip-deg} holds for any $\alpha > C(H)$ (constant depends only on $H$).
\end{Remark}

\begin{Remark} \label{rm-deg}
The natural extension (regarding regularity with respect to $x$ of $H$) is
\begin{eqnarray}
\begin{cases}
\label{hypH4}
\text{ There exist } L_{1H},L_{2H} >0 \text{ such that for all } x,y,p,q\in\R^N \\
 |H(x,p)-H(y,p)|\leq L_{1H}|x-y|(1+|p|), \\
 |H(x,p)-H(x,q)| \leq L_{2H}|p-q|.
\end{cases}
 \end{eqnarray}
If $\sigma$ is a constant matrix, i.e., $L_\sigma = 0$ in \eqref{hyp-sig} and $H(x,p) = H(p)$ is Lipschitz continuous ($L_{1H} = 0$ in \eqref{hypH4}) (as in \cite{fil06}), then $C(\sigma,H) = 0$ in the proof, meaning that Theorem~\ref{lip-er-deg} $(i)$ holds with further assumption on the strength of the Ornstein-Uhlenbeck operator. It means that \eqref{estim-lip-deg} holds for any $\alpha > 0$ and this conclusion is still true in the nonlocal case.
\end{Remark}

\begin{Remark}\label{rm-deg1}
If $\sigma$ is any bounded lipschitz continuous symmetric matrix, i.e. \eqref{hyp-sig} holds. Assume \eqref{hypH4} with $L_{1H} = 0$, then the conclusion of Theorem \ref{lip-er-deg} should be there exists $C(\mathcal{F})=C(\sigma)$ (in the local case only) such that for any $\alpha > C(\mathcal{F})$, \eqref{estim-lip-deg} holds. The condition on $\alpha$ here is to compensate the degeneracy only.
\end{Remark}


\section{Regularity of the solutions for the evolution equation}\label{sec-evo}
In this section we establish Lipschitz estimates  (in space) for the solutions of the Cauchy problem \eqref{cauchy},
which are uniform in time.
\subsection{Regularity in the uniformly parabolic case}
\begin{Thm}\label{lip-cauchy}
Let $u \in C(Q_T)\cap  \mathcal{E}_\mu(Q_T) $ be a solution of \eqref{cauchy}. In addition to the hypotheses of Theorem \ref{lip-er}, assume that $u_0$ satisfies \eqref{fil-f-term} with a constant $C_0$. Then there exists a constant $C > 0$ independent of $T$ such that
\begin{eqnarray}\label{lipcon-u}
&& |u(x,t) - u(y,t)| \leq C|x-y|(\phi_\mu(x) + \phi_\mu(y)) \text{ for } x,y \in \R^N, \ t \in [0,T).
\end{eqnarray}
\end{Thm}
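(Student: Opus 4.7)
The plan is to mimic the doubling-of-variables argument of Theorem~\ref{lip-er}, the only genuinely new features being the loss of the monotone term $\lambda u$ (which supplied the strictly positive quantity $\lambda\sqrt{\delta}$ that closed the contradiction) and the need to handle the initial condition. I would compensate the first by penalizing the time variable with the singular term $\eta/(T-t)$ and the second by slightly enlarging the constant $C_1$ already produced in the stationary proof.

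Concretely, for small parameters $\eta,\delta>0$, I would consider
\[
M=\sup_{(x,y,t)\in\R^N\times\R^N\times[0,T)}\Big\{u(x,t)-u(y,t)-\varphi(x,y)-\frac{\eta}{T-t}\Big\},
\]
with $\varphi$ as in~\eqref{varphi} and with the same triple $(A,C_1,\psi)$ inherited from the relevant sub-case of Theorem~\ref{lip-er}. Assuming by contradiction that $M>0$: since $u\in\mathcal{E}_\mu(Q_T)$, $\phi_\mu$ dominates $u$ in space, and $\eta/(T-t)\to+\infty$ as $t\to T^-$, the supremum is attained at some $(x^*,y^*,t^*)$ with $x^*\neq y^*$ and $t^*\in[0,T)$. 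If $t^*=0$, then $u(\cdot,0)=u_0$ and hypothesis~\eqref{fil-f-term} on $u_0$ yield
\[
C_0(\phi(x^*)+\phi(y^*))|x^*-y^*|\geq u_0(x^*)-u_0(y^*)>(\psi(|x^*-y^*|)+\delta)C_1(\phi(x^*)+\phi(y^*)+A),
\]
which, using $\psi(r)\geq\psi_{\rm min}'r$, is impossible once $C_1\psi_{\rm min}'>C_0$; I therefore enlarge the $C_1$ from the stationary proof to meet this bound (a harmless, $T$-independent requirement).

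If instead $t^*\in(0,T)$, I apply the parabolic Crandall-Ishii lemma (\cite{cil92} locally, \cite{bi08} in the nonlocal setting) to obtain $a,b\in\R$ and $X,Y\in\mathcal{S}_N$ satisfying the same matrix inequality~\eqref{ineg-matricielle} as in the stationary case together with $a-b=\partial_t(\eta/(T-t))|_{t=t^*}=\eta/(T-t^*)^2>0$. Subtracting sub- and super-solution viscosity inequalities reproduces exactly~\eqref{ineq-visco}, with $a-b$ in the role of $\lambda(u(x^*)-u(y^*))\geq\lambda\sqrt{\delta}$; since the remaining spatial machinery (estimates on the Ornstein-Uhlenbeck drift, on $H$ and $f$, the trace bound of Lemma~\ref{trace}, the nonlocal estimate of Proposition~\ref{nlocalgen}, the concave test-function construction, and the Hölder-then-Lipschitz bootstrap behind Proposition~\ref{holder-lemma} in case $(ii)$) is purely spatial and relies only on the unchanged hypotheses on $\mathcal{F},b,H,f$, Steps~3--9 (resp.~3--10) of Theorem~\ref{lip-er} transfer verbatim to give
\[
0<\frac{\eta}{(T-t^*)^2}\leq 2C_1K\delta,
\]
which is impossible for fixed $\eta>0$ as soon as $\delta<\eta/(2C_1KT^2)$. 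Hence $M\leq 0$; letting first $\delta\to 0$ and then $\eta\to 0$, and bounding $\psi(r)\leq\psi_{\rm max}'r$, yields~\eqref{lipcon-u} with $C=C_1\psi_{\rm max}'(A+1)$.

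The main obstacle I anticipate is precisely the parameter bookkeeping: although the auxiliary threshold on $\delta$ depends on $T$, this dependency disappears in the two limits $\delta\to 0$, $\eta\to 0$, and the spatial constants $A,C_1,\psi$ are $T$-independent (dictated only by the stationary argument plus the initial-data balance $C_1\psi_{\rm min}'>C_0$). A conceptually important but technically straightforward fact is that the purely temporal penalty never interacts with the Ishii-Lions ellipticity, with the Ornstein-Uhlenbeck estimate~\eqref{fil-crucial-tool}, or with the Hölder bootstrap, which is exactly what allows the whole stationary scheme to transfer cleanly to the parabolic setting and to produce a Lipschitz constant $C$ that is independent of $T$.
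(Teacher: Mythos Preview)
Your proposal is correct and follows essentially the same route as the paper: penalize time by $\eta/(T-t)$, rule out $t^*=0$ by enlarging $C_1$ so that $C_1\psi_{\min}'\geq C_0$, and for $t^*>0$ invoke the parabolic Crandall--Ishii lemma so that $a-b=\eta/(T-t^*)^2$ replaces the stationary term $\lambda\sqrt{\delta}$, after which the spatial machinery of Theorem~\ref{lip-er} runs unchanged. Your remark that the $T$-dependence enters only through the auxiliary threshold on $\delta$ (and hence disappears in the limits) is exactly the point the paper makes when fixing $\delta=\epsilon\min\{1,1/(3C_1KT^2)\}$.
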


\begin{Remark}
If we have comparison theorem for \eqref{cauchy}, then classical techniques allow to deduce from Theorem \ref{lip-cauchy},  Lipschitz estimates in time for the solution, see \cite[Theorem~3.2]{fil06}. More generally, for such kind of equations, Lipschitz estimates in space imply H\"older estimates in time, see \cite[Lemma 9.1]{bbl02} for instance.
\end{Remark}

\begin{proof}[\bf Proof of Theorem \ref{lip-cauchy}]

We only give a sketch of proof since it is close to the proof of Theorem \ref{lip-er}.

Let $\epsilon ,\delta, A ,C_1>0$ and a $C^2$ concave and increasing function
$\psi:\R_+ \rightarrow \R_+$ with $\psi(0)=0$ which is defined as in the proof of Theorem \ref{lip-er} depending on the local or nonlocal case such that 
\begin{eqnarray}\label{c1c0}
C_1 \psi'_{\rm min} \geq C_0.
\end{eqnarray}

We consider
\begin{eqnarray*}
&& M =\sup_{(\R^N)^2 \times [0,T)}
\left\{
u(x,t)-u(y,t)-(\psi(|x-y|)+\delta)\Phi(x,y) -\frac{\epsilon}{T-t}
\right\},
\end{eqnarray*}
where $\Phi(x,y)= C_1(\phi(x)+\phi(y)+A)$, $\phi = \phi_\mu$ defined by \eqref{exp-growth}. Set $\varphi(x,y,t) =(\psi(|x-y|)+\delta)\Phi(x,y) -\frac{\epsilon}{T-t} $.

If $M \leq 0$ for some good choice of $A,C_1$ $\psi$ independent of $\delta, \epsilon >0$, then we get some locally uniform estimates by letting
$\delta, \epsilon \to 0$ . So we argue by contradiction,
assuming that $M >0.$

Since $u\in \mathcal{E}_\mu(\R^N)\cap C(Q_T) ,$ the supremum is achieved
at some point $(x,y,t)$ with $x \neq y$ thanks to $\delta, \epsilon >0$ and the continuity of $u$.

Since $M>0$, if $t=0$, from \eqref{fil-f-term} and by concavity of $\psi$, i.e., $\psi(|x-y|) \geq \psi'(|x-y|)|x-y|\geq \psi'_{\rm min} |x-y|$, we have
\begin{eqnarray*}
0 < M &=& u(x,0)-u(y,0)-C_1(\psi(|x-y|)+\delta)(\phi(x)+\phi(y)+A)-\frac{\epsilon}{T}\\
&\leq& C_0|x-y|(\phi(x)+\phi(y)+A) - C_1\psi'_{\rm min} |x-y|(\phi(x)+\phi(y)+A)-\frac{\epsilon}{T}.
\end{eqnarray*}
The last inequality is strictly negative due to \eqref{c1c0}. Therefore $t>0$. Now we can apply \cite[Theorem 8.3]{cil92} in the local case and \cite[Corollary 2]{bi08} in the nonlocal one to learn that, for any $\varrho > 0$, there exist $a,b \in \R$ and $X,Y \in \mathcal{S}^N$ such that
$$(a,D_x\varphi, X) \in \overline{\mathcal{P}}^{2,+}u(x,t); \qquad (b,-D_y\varphi, Y) \in \overline{\mathcal{P}}^{2,-}u(y,t),$$
\begin{eqnarray*}
a-b = \varphi_t(x,y,t) = \frac{\epsilon}{(T-t)^2} \geq \frac{\epsilon}{T^2}, \text{ and }
\left(\begin{array}{cc}
X &O\\
O&-Y
\end{array}
\right)
\leq 
A + \varrho A^2,
\end{eqnarray*}
where $A = D^2\varphi(x,y,t)$ 
and $\varrho A^2 = O(\varrho)$ ($\varrho$ will be sent to $0$ first).

Writing and subtracting the viscosity inequalities at $(x,y,t)$ in the local and nonlocal case we have
\begin{eqnarray}\label{final1}
&& \frac{\epsilon}{T^2} - (\mathcal{F}(x,[u]) - \mathcal{F}(y,[u])) +\langle b(x), D_x\varphi\rangle - \langle b(y), -D_y\varphi\rangle \\
&& + H(x, D_x\varphi)- H(y, -D_y\varphi)\nonumber \\
\nonumber
&\leq& f(x) - f(y),\nonumber
\end{eqnarray}
where $\mathcal{F}(x,[u]) = {\rm tr}(A(x)X)$ and $\mathcal{F}(y,[u]) = {\rm tr}(A(y)Y)$ in the local case and $\mathcal{F}(x,[u]) = \mathcal{I}(x,u,D_x \varphi)$ and $\mathcal{F}(y,[u]) = \mathcal{I}(y,u,-D_y \varphi)$ in the nonlocal one.

All of the different terms in \eqref{final1} are estimated as in the proof of Theorem \ref{lip-er}. We only need to fix
\begin{eqnarray*}\label{epsidel1}
\delta = \epsilon \min \lbrace 1, \frac{1}{3C_1KT^2}\rbrace,~~\text{then } \frac{\epsilon}{T^2}> 2C_1\delta K,
\end{eqnarray*}
where $K>0$ is a constant coming from \eqref{estim-phi2}. Therefore we reach a contradiction as in the proof of Theorem \ref{lip-er}.
%

We have proved that $M \leq 0$ for $\delta$ small enough.
It follows that, for every $x,y\in \R^N,$ $t\in [0,T)$,
\begin{eqnarray*}
|u(x,t)-u(y,t)|\leq C_1(\psi(|x-y|)+\delta)(\phi(x)+\phi(y)+A) +\frac{\epsilon}{T-t}.
\end{eqnarray*}
Since ${A},C_1, \psi$ do not depend on $\delta, \epsilon,$ letting $\delta, \epsilon \to 0$
and using the fact that $\psi$ is a concave increasing function, we have $\psi(r) \leq \psi'(0)r $. Finally, for all $x,y\in \R^N,$ $t\in [0,T)$ we get
\begin{eqnarray*}
|u(x,t)-u(y,t)|\leq C_1\psi'(0)|x-y|(\phi(x)+\phi(y)+A)
\leq C_1\psi'(0)(A+1)|x-y|(\phi(x)+\phi(y)).
\end{eqnarray*}
Since $\phi\geq 1,$ \eqref{lipcon-u} holds with
$C= C_1\psi'(0)(A+1).$
\end{proof}

\subsection{Regularity of solutions for degenerate parabolic equation}
\begin{Thm}\label{lip-cauchy-deg}
  Let $u \in C(Q_T)\cap  \mathcal{E}_\mu(Q_T) $ be a solution of \eqref{cauchy}. In addition to the hypotheses of Theorem \ref{lip-er-deg}, assume that $u_0$ satisfies \eqref{fil-f-term}. There exists $C(\mathcal{F},H)$ as in Theorem~\ref{lip-er-deg},
  such that, for any $\alpha > C(\mathcal{F},H)$,~\eqref{lipcon-u} holds for some $C>0$ independent of $T.$
\end{Thm}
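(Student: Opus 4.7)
The plan is to combine the parabolic doubling-of-variables argument from the proof of Theorem~\ref{lip-cauchy} with the choice of test-function and the key estimates from the proof of the degenerate stationary Theorem~\ref{lip-er-deg}. Concretely, for $\epsilon,\delta,A,C_1>0$ to be fixed, I consider
\begin{eqnarray*}
M=\sup_{(x,y,t)\in (\R^N)^2\times[0,T)}\Big\{u(x,t)-u(y,t)-(\psi(|x-y|)+\delta)\Phi(x,y)-\frac{\epsilon}{T-t}\Big\},
\end{eqnarray*}
with $\Phi(x,y)=C_1(\phi(x)+\phi(y)+A)$ and, following the degenerate framework of Theorem~\ref{lip-er-deg}, the \emph{linear} choice $\psi(r)=r$ (so no ellipticity effect is needed from $\psi''$). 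I first fix $C_1\geq C_0+1$ so that, using the initial estimate~\eqref{fil-f-term} on $u_0$ and $\psi'_{\min}=1$, the supremum cannot be attained at $t=0$; together with $u\in\mathcal{E}_\mu(Q_T)$, continuity of $u$, and $\delta,\epsilon>0$, this ensures that if $M>0$ it is reached at some $(x,y,t)$ with $x\neq y$ and $t\in(0,T)$.

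Next I would apply the parabolic version of Ishii's lemma (\cite[Thm~8.3]{cil92} in the local case, \cite[Cor.~2]{bi08} in the nonlocal case) to get $(a,D_x\varphi,X)\in\overline{\mathcal P}^{2,+}u(x,t)$, $(b,-D_y\varphi,Y)\in\overline{\mathcal P}^{2,-}u(y,t)$ with $a-b=\epsilon/(T-t)^2\geq \epsilon/T^2$ and the usual matrix inequality on $(X,Y)$. Subtracting the two viscosity inequalities yields, in place of \eqref{ineq-visco-deg},
\begin{eqnarray*}
\frac{\epsilon}{T^2}-(\mathcal F(x,[u])-\mathcal F(y,[u]))+\langle b(x),D_x\varphi\rangle-\langle b(y),-D_y\varphi\rangle+H(x,D_x\varphi)-H(y,-D_y\varphi)\leq f(x)-f(y).
\end{eqnarray*}
I then estimate each term exactly as in Theorem~\ref{lip-er-deg}: the Ornstein-Uhlenbeck part gives \eqref{estim-b}, hypothesis \eqref{hypH2} yields \eqref{estimH1} for the $H$-term, $f$ is handled by \eqref{estimf}, and the second-order/nonlocal difference is estimated via Lemma~\ref{trace}(i) in case~(i) and via Proposition~\ref{nlocalgen}(i) (the ``big $|a|$'' bound \eqref{degnon}, which applies uniformly since $\psi(r)=r$) in case~(ii). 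The $\phi$-terms are then controlled by Lemma~\ref{Lemphi} with $L(x,y)=2(\mathcal{C}_\sigma+L_H)(1+|x|+|y|)$ in case~(i), respectively $L(x,y)=2L_H(1+|x|+|y|)$ in case~(ii), which requires $\alpha>2L_0$ in the notation of that lemma, i.e.\ $\alpha>C(\mathcal F,H)$ with the constants prescribed in Theorem~\ref{lip-er-deg}.

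Putting these estimates together, I arrive at an inequality of the form (cf.~\eqref{ineg-contra-deg})
\begin{eqnarray*}
\frac{\epsilon}{T^2}+C_1\alpha(\phi(x)+\phi(y)+A)|x-y|\leq C_1(\mathcal{C}_\sigma+L_H)(\phi(x)+\phi(y)+A)|x-y|+C_1(|x-y|+\delta)2K+\text{l.o.t.},
\end{eqnarray*}
where the lower-order terms $C_f(\phi(x)+\phi(y))|x-y|+L_H|x-y|$ are absorbed by choosing $C_1$ large enough (depending only on $C_f,\alpha,C_0$) and $A$ large enough (depending on $K$, $L_H$, $\alpha$) exactly as in \eqref{degf}. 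Using $\alpha>C(\mathcal F,H)$ to absorb the $\mathcal{C}_\sigma+L_H$ term on the right, the inequality reduces to
\begin{eqnarray*}
\frac{\epsilon}{T^2}\leq 2C_1K\delta.
\end{eqnarray*}
This is the key point where $T$-independence has to be taken care of: I would fix $\delta=\epsilon\min\{1,1/(3C_1KT^2)\}$ as in the proof of Theorem~\ref{lip-cauchy}, which gives $\epsilon/T^2>2C_1K\delta$, a contradiction. Hence $M\leq 0$, and sending $\delta,\epsilon\to 0$ (possible since $A$, $C_1$, $\psi$ are independent of these parameters) together with $\psi(r)=r$ and $\phi\geq 1$ yields \eqref{lipcon-u} with $C=C_1(A+1)$, independent of $T$.

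The main obstacle I anticipate is purely the bookkeeping: one must check that the $T$-dependence introduced by the $\epsilon/(T-t)$ penalization is completely absorbed into the choice of $\delta$ (as in Theorem~\ref{lip-cauchy}) without contaminating $A$, $C_1$, or the condition $\alpha>C(\mathcal F,H)$. Since the degenerate-case test function is linear ($\psi(r)=r$, $\psi'\equiv 1$) there is no delicate interplay between a concave profile and ellipticity to worry about, so beyond this $T$-independence check the argument is essentially a parabolic copy of Theorem~\ref{lip-er-deg}.
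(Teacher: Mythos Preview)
Your proposal is correct and follows exactly the approach the paper prescribes: the paper's own proof of Theorem~\ref{lip-cauchy-deg} consists of a single sentence stating that one adapts Theorem~\ref{lip-er-deg} via the parabolic extension explained in Theorem~\ref{lip-cauchy}, and you have carried out precisely this combination (linear $\psi(r)=r$, the $\epsilon/(T-t)$ penalization, handling $t=0$ via~\eqref{fil-f-term} for $u_0$, and the choice $\delta=\epsilon\min\{1,1/(3C_1KT^2)\}$ to absorb the $T$-dependence). One small remark: your parenthetical that \eqref{degnon} ``applies uniformly since $\psi(r)=r$'' is slightly off---the estimate~\eqref{degnon} in Proposition~\ref{nlocalgen}(i) actually holds for all $a\neq 0$ regardless of $\psi$ (its proof never uses the restriction $|a|\geq a_0$), so no special property of the linear choice is needed there.
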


The proof of this Theorem is an adaptation of the one of Theorem \ref{lip-er-deg} using the same extension to the parabolic case as explained in the proof of Theorem \ref{lip-cauchy}. So we omit here.

We have also the same Remarks as presented for the elliptic equations.

\section{Estimates for the growth function, local and nonlocal operators.}\label{sec-ests}
\subsection{Estimates for exponential growth function}
\begin{proof}[\textbf{Proof of Lemma \ref{Lemphi}}]\text{ }
\smallskip

\noindent{\it 1. Estimates on $\phi:$}
Recalling that $\phi(x)=e^{\mu \sqrt{|x|^2+1}}$ and setting $\langle x \rangle = \sqrt{|x|^2+1}$, for $x \in \R^N $, we have
\begin{eqnarray}\label{d1d2phi}
D\phi=\frac{\mu x}{\langle x \rangle}\phi (x), \quad D^2\phi=\frac{\mu \phi (x) }{\langle x \rangle}\Big[ I -(1-\mu \langle x \rangle) \frac{x}{\langle x \rangle}\otimes \frac{x}{\langle x \rangle}\Big].
\end{eqnarray}
\noindent{\it 2. Estimates on the Ornstein-Uhlenbeck operator.}
From \eqref{dissipative} and \eqref{d1d2phi} we have 
\begin{eqnarray}\label{lemphi-b}
\langle b(x),D\phi(x)\rangle &=& \langle b(x)-b(0),D\phi(x)\rangle + \langle b(0),D\phi(x)\rangle\\
& = & \langle b(x)-b(0),\mu \frac{x}{\langle x \rangle}\phi(x)\rangle + \langle b(0),\mu \frac{x}{\langle x \rangle}\phi(x)\rangle\nonumber\\
 &\geq & \mu\phi(x) \left(\frac{\alpha |x|^2}{\langle x \rangle}  -|b(0)|\right).\nonumber
\end{eqnarray}
\noindent{\it 3. Estimates for the local term.}
From \eqref{d1d2phi}, we compute that 
\begin{eqnarray}\label{lemphi-tr}
{\rm tr}(A(x)D^2\phi(x)) &=& \frac{\mu \phi (x) }{\langle x \rangle}\Big[ {\rm tr}(A(x)) - (1-\mu \langle x \rangle) {\rm tr} \Big(A(x) \frac{x}{\langle x \rangle}\otimes \frac{x}{\langle x \rangle}\Big)\Big] \\
&\leq& \frac{\mu \phi (x) }{\langle x \rangle} \Big[ | \sigma|^2 + \mu \langle x \rangle   |\sigma|^2 \Big] \nonumber \\
&\leq& C(\mu,| \sigma |)\phi(x).\nonumber
\end{eqnarray}
\noindent{\it 4. Estimates for the nonlocal term.}
We have
\begin{eqnarray}\label{Iphi}
&&\mathcal{I}(x,\phi,D\phi)\\ 
&=&  \int_B (\phi(x+z) - \phi(x) - \langle D\phi(x), z\rangle)\nu(dz) + \int_{B^c}(\phi(x+z) - \phi(x))\nu(dz)\nonumber\\
&=& \int_B \int_0^1(1-s)\langle D^2\phi(x+sz)z,z\rangle ds \nu(dz) + \phi(x) \int_{B^c}\left(\frac{\phi(x+z)}{\phi(x)} - 1 \right)\nu(dz)\nonumber\\
&\leq& \int_B \int_0^1 | D^2\phi(x+sz)||z|^2ds \nu(dz) + \phi(x) \int_{B^c}\left(\frac{\phi(x+z)}{\phi(x)} - 1 \right)\nu(dz)\nonumber
\end{eqnarray}
From \eqref{d1d2phi} we have
\begin{eqnarray}\label{phi2}
&& | D^2\phi(x)| \leq \frac{\mu \phi (x) }{\langle x \rangle}\Big[|I| + \Big|  \frac{x}{\langle x \rangle}\otimes \frac{x}{\langle x \rangle} \Big| + \mu \langle x \rangle\Big| \frac{x}{\langle x \rangle}\otimes \frac{x}{\langle x \rangle}\Big|\Big]\leq C(\mu, N)\phi(x).
\end{eqnarray}
On the other hand, for any $a,b$ we have
\begin{eqnarray*}
1 + (a+b)^2 \leq 1+ a^2 + 2\sqrt{1+a^2}\sqrt{1+b^2} + b^2  \leq \left( \sqrt{1+a^2} + \sqrt{1+b^2}\right)^2.
\end{eqnarray*}
This implies $\sqrt{1+(a + b)^2} \leq \sqrt{1+a^2} + \sqrt{1+b^2}$ and therefore
\begin{eqnarray}\label{phi-product}
\phi(x+z) = e^{\mu\sqrt{1+|x+z|^2}} \leq \phi(x)\phi(z), ~~ \forall x,z \in \R^N.
\end{eqnarray}

Using \eqref{phi2} and \eqref{phi-product} we have, for all $s \in [0,1]$,
$
|D^2\phi(x+sz)|
\leq C(\mu, N) \phi(x) \phi(z).
$
Hence, inequality \eqref{Iphi} becomes
\begin{eqnarray*}
\mathcal{I}(x,\phi,D\phi) &\leq& C(\mu, N) \phi(x)\int_B \phi(z)|z|^2 \nu(dz) + \phi(x) \int_{B^c} (\phi(z)-1)\nu(dz).
\end{eqnarray*}

Then, using $\eqref{M1}$, we get
\begin{eqnarray}\label{lemphi-i}
\mathcal{I}(x,\phi,D\phi) \leq C(\mu, N)C^1_\nu \phi(x) + C^1_\nu\phi(x) = C(\mu,\nu)\phi(x).
\end{eqnarray}
\noindent{\it 5. Estimate of the Lemma and end the computations for both cases.}
Let $L_0>0$, $L(x,y):= L_0(1+|x| + |y|)$ and set 
$$\mathcal{L}_L[\phi](x,y):= - \mathcal{F}(x,[\phi]) - \mathcal{F}(y,[\phi]) +\langle b(x),D\phi(x)\rangle +\langle b(y),D\phi(y)\rangle - L(x,y)(|D\phi(x)| + |D\phi(y)|).$$

Set
\begin{eqnarray*}
C(\mathcal{F}) = 
\begin{cases}
C(\mu, | \sigma |),& \text{if } \mathcal{F} \text{ is defined by } \eqref{F-diffusion}\\
C(\mu,\nu), & \text{if } \mathcal{F} \text{ is defined by } \eqref{F-nonlocal}.
\end{cases}
\end{eqnarray*}
Since $|D\phi(x)| \leq \mu \phi(x)$, from \eqref{lemphi-b}, \eqref{lemphi-tr} and \eqref{lemphi-i} we have
\begin{eqnarray}\label{estimate-phi}
&&\mathcal{L}_L[\phi](x,y) \geq
\mu \phi(x)\left( a(x) - L_0|x| - L_0|y|\right) + \mu \phi(y)\left( a(y) - L_0|x| - L_0|y|\right),\nonumber
\end{eqnarray}
where $a(x) =  \alpha \frac{|x|^2}{\langle x\rangle}- |b(0)|- C(\mathcal{F}) -  L_0,$ $\forall x \in \R^N.$ Now we define
$$R_x:= \frac{2}{\alpha - 2L}\left( \frac{2}{\mu} + |b(0)|+C(\mathcal{F}) +  L_0 + L_0|x|\right) $$
and take $\alpha > 2L_0$, if $|x|\geq R_y \text{ and } |y|\geq R_x,$
then $\mathcal{L}_L [\phi](x,y) \geq 2\phi(x) + 2 \phi(y).$ Set
\begin{eqnarray*}
\mathop{\rm sup}_{y\in{B}(0,R_x)} \{\mu \phi(y)\left( -a(y) + L_0|x| + L_0|y|\right)\} =:K_x,
\end{eqnarray*}
then we get that, for all $x,y\in\R^N,$ $\mathcal{L}_L[\phi](x,y) \geq \phi(x) + \phi(y) +\phi(x) - K_x + \phi(y) - K_y.$
Since $\alpha > 2L_0$, we have $\sup_{\R^N}\{-\phi(x) + K_x\}, \sup_{\R^N}\{-\phi(y) + K_y\} < + \infty$. Hence, define
$K:= \sup \{ \sup_{\R^N}\{-\phi(x) + K_x\}, \sup_{\R^N}\{-\phi(y) + K_y\}\},$ we obtain
\begin{eqnarray}\label{lemphi-final}
\mathcal{L}_L[\phi](x,y) \geq \phi(x) + \phi(y) - 2K.
\end{eqnarray}

We now suppose that $L(x,y) = L_0$ independent of $x,y$. 
Setting
$$ R_\phi:= \frac{2}{\alpha }\left( \frac{1}{\mu} + |b(0)|+C(\mathcal{F}) +  L_0 \right), ~~K:=\mathop{\rm sup}_{x\in{B}(0,R_\phi)} \{\mu \phi(x)\left( -a(x) \right)\}$$
and using the same arguments as above we obtain that \eqref{lemphi-final} holds for any $\alpha >0$. Since $\phi(x)\to +\infty$ as $x\to +\infty,$ hence there exists $R\geq R_\phi$ such that
\begin{eqnarray*}
&& -\mathcal{F}(x,[\phi]) + \langle b(x),D\phi(x)\rangle -L_0|D\phi(x)|\geq \phi(x)-K\geq
\left\{
\begin{array}{ll}
 -K & \text{for $|x|\leq R,$}\\
K & \text{for $|x|\geq R.$}
\end{array}
\right. 
\end{eqnarray*}
Notice that $K$ and $R$ depend only on $\mathcal{F}, b, L, \mu $.
\end{proof}

\subsection{Estimates for the local operator}\label{sec-local}
\begin{proof}[\textbf{Proof of Lemma \ref{trace}}]\text{ }
\smallskip

\noindent{\it 1. Using the matrix inequality \eqref{ineg-matricielle}.} From \eqref{ineg-matricielle}, setting
\begin{eqnarray}\label{XxYy}
X_x= X- C_1(\psi+\delta)D^2\phi(x)
\quad {\rm and} \quad 
Y_y= Y+ C_1(\psi+\delta)D^2\phi(y).
\end{eqnarray}
We have, for every $\zeta, \xi\in\R^N,$
\begin{eqnarray*}
&&\langle X_x\zeta, \zeta\rangle - \langle Y_y\xi, \xi\rangle +O(\varrho)\\
&\leq&  
\psi'' \Phi \langle \zeta-\xi, p\otimes p(\zeta-\xi)\rangle
+ \psi' \Phi \langle \zeta-\xi, \mathcal{C}(\zeta-\xi)\rangle + C_1\psi'
[ 
 \langle p\otimes D\phi(x)+D\phi(x)\otimes p \zeta, \zeta\rangle
 \nonumber\\
& & +\langle  (p\otimes D\phi(y)-D\phi(x)\otimes p) \xi, \zeta\rangle + \langle (D\phi(y)\otimes p - p\otimes (D\phi(x))\zeta, \xi\rangle
\nonumber\\
& &-\langle (p\otimes D\phi(y)+D\phi(y)\otimes p) \xi, \xi\rangle ],\nonumber
\end{eqnarray*}
where $p$ and $\mathcal{C}$ are given by \eqref{p-C}. Set $\sigma_x = \sigma(x),$ $\sigma_y = \sigma(y)$.
\smallskip

\noindent{\it 2. Computing the trace with suitable orthonormal basis.}
Following Ishii-Lions~\cite{il90} and Barles~\cite{barles91b}, we choose an orthonormal  basis 
$(e_i)_{1\leq i\leq N}$ to compute
${\rm tr}(\sigma_x\sigma_x^TX)$ and another
one,  $(\tilde{e}_i)_{1\leq i\leq N}$ to compute
${\rm tr}(\sigma_x\sigma_x^TY).$
Now we estimate $T:={\rm tr} (A(x)X_x - A(y)Y_y) \ {\rm with}  \  A = \sigma\sigma^T$ in the following way:
\begin{eqnarray*}
T &=& \sum_{i=1}^N  \langle X_x \sigma_x e_i, \sigma_x e_i\rangle 
- \langle Y_y\sigma_y \tilde{e}_i, \sigma_y \tilde{e}_i\rangle
\\
&\leq & \sum_{i=1}^N
\psi'' \Phi   \langle p\otimes p Q_i,Q_i\rangle
+ \psi'  \Phi\langle \mathcal{C}Q_i,Q_i\rangle  + C_1\psi'[\langle p\otimes D\phi(x)\sigma_x e_i, Q_i\rangle \\
&&+ \langle D\phi(x)\otimes p Q_i, \sigma_x e_i \rangle   + \langle p\otimes D\phi(y)\sigma_y \tilde{e}_i , Q_i \rangle + \langle D\phi(y)\otimes p Q_i, \sigma_y \tilde{e}_i \rangle] + O(\varrho) \\
&\leq & \psi'' \Phi   \langle p,Q_1\rangle ^2 +
\sum_{i=1}^N  \psi'  \Phi\langle \mathcal{C}Q_i,Q_i\rangle  + C_1\psi'\mathcal{P}_i + O(\varrho).
\end{eqnarray*}
where we set $Q_i = \sigma_x e_i - \sigma_y \tilde{e}_i $, noticing that $\psi'' \Phi   \langle p\otimes p Q_i,Q_i\rangle = \psi'' \Phi   \langle p,Q_i\rangle ^2 \leq 0$ since $\psi$ is concave function and for all $1 \leq i \leq N,$
$$\mathcal{P}_i = \langle p\otimes D\phi(x)\sigma_x e_i, Q_i\rangle + \langle D\phi(x)\otimes p Q_i, \sigma_x e_i \rangle  + \langle p\otimes D\phi(y)\sigma_y \tilde{e}_i , Q_i \rangle + \langle D\phi(y)\otimes p Q_i, \sigma_y \tilde{e}_i \rangle.$$
We now set up suitable basis in two following cases.
\smallskip

\noindent{\it 2.1. Estimates for the trace when $\sigma$ is degenerate, i.e., \eqref{hyp-sig} holds only.} We choose any orthonormal basis such that $e_i = \tilde{e}_i$ . It follows
\begin{eqnarray*}
T  & \leq & \sum_{i=1}^N  \psi'  \Phi \langle \mathcal{C}(\sigma_x  - \sigma_y)e_i, (\sigma_x  - \sigma_y)e_i \rangle \\
& & + C_1\psi'[\langle p\otimes D\phi(x)\sigma_x e_i, (\sigma_x  - \sigma_y)e_i \rangle + \langle D\phi(x)\otimes p (\sigma_x  - \sigma_y)e_i, \sigma_x e_i \rangle  \\
& & + \langle p\otimes D\phi(y)\sigma_y e_i , (\sigma_x  - \sigma_y)e_i \rangle + \langle D\phi(y)\otimes p (\sigma_x  - \sigma_y)e_i, \sigma_y e_i \rangle] + O(\varrho)\\
& \leq & N\psi'  \Phi |\sigma_x  - \sigma_y|^2|\mathcal{C}| + NC_1\psi'[|p\otimes D\phi(x)||\sigma_x||\sigma_x  - \sigma_y| + |D\phi(x)\otimes p ||\sigma_x  - \sigma_y| |\sigma_x|\\
&& + |p\otimes D\phi(y)||\sigma_y||\sigma_x  - \sigma_y| + |D\phi(y)\otimes p ||\sigma_x  - \sigma_y| |\sigma_y|
] + O(\varrho).
\end{eqnarray*}
By \eqref{p-C} we first have $|\mathcal{C}| \leq 1/|x-y|$, and $|D\phi\otimes p |,|p\otimes D\phi| \leq  |D\phi|$. Using the fact that $\sigma$ is a Lipschitz and bounded function, i.e., \eqref{hyp-sig} holds, we obtain
\begin{eqnarray*}
T \leq N L_\sigma^2 |x-y|\psi' \Phi + 2NC_\sigma L_\sigma C_1(|D\phi(x)| + |D\phi(y)|)|x-y| \psi'. 
\end{eqnarray*}
Then by the concavity of $\psi$, i.e., $\psi'(|x-y|)|x-y| \leq \psi(|x-y|)$ and from \eqref{XxYy}, we get
\begin{eqnarray}\label{Tr1}
&&-{\rm tr}(A(x)X - A(y)Y) \\
&\geq&  - \tilde{C}_\sigma |x-y|\psi' \Phi -C_1(\psi + \delta) [{\rm tr}(A(x)D^2\phi(x)) + {\rm tr}(A(y)D^2\phi(y))\nonumber\\
&& \qquad  +\tilde{C}_\sigma (|D\phi(x)|+|D\phi(y)|)] +O(\varrho),\nonumber
\end{eqnarray}
where
\begin{eqnarray}\label{Ctilde}
\tilde{C}_\sigma  = \max \lbrace NL_\sigma^2, 2N C_\sigma L_\sigma \rbrace.
\end{eqnarray}

\noindent{\it 2.2. More precise estimate when $\sigma$ is strictly elliptic, i.e., \eqref{ellipticite1} holds.}
Since $\sigma$ is uniformly invertible, we can choose
$e_1=\frac{\sigma_x^{-1}p}{|\sigma_x^{-1}p|} \ , \tilde{e}_1 = - \frac{\sigma_y^{-1}p}{|\sigma_y^{-1}p|}.$ If $e_1$ and $\tilde{e_1}$ are collinear, then we can complete the basis with orthonormal unit vectors $e_i = \tilde{e_i} \in e_1^\perp, \ 2 \leq i \leq N.$ Otherwise, in the plane span $\{e_1, \tilde{e}_1\}$, we consider a rotation $\mathcal{R}$ of angle $\frac{\pi}{2}$ and define $e_2 = \mathcal{R}e_1,~~\tilde{e}_2 = -\mathcal{R}\tilde{e}_1.$ Finally, noticing that $\text{span}\{e_1,e_2\}^\perp = \text{span}\{\tilde{e}_1,\tilde{e}_2\}^\perp$, we can complete the orthonormal bases with unit vectors $e_i = \tilde{e}_i \in \text{span}\{e_1,e_2\}^\perp , \ 3 \leq i \leq N.$ 

For $i = 1$, we compute
\begin{eqnarray*}
&& Q_1 = \sigma_x e_1 - \sigma_y \tilde{e}_1 = [\frac{1}{|\sigma_x^{-1}p|} + \frac{1}{|\sigma_y^{-1}p|}]p ~~\Rightarrow ~~|\langle p,Q_1\rangle | = \frac{1}{|\sigma_x^{-1}p|} + \frac{1}{|\sigma_y^{-1}p|}.
\end{eqnarray*}
%
We have
\begin{eqnarray*}
&& |\sigma^{-1}p|^2 = \langle \sigma^{-1}p , \sigma^{-1}p \rangle = \langle (\sigma^{-1})^T \sigma^{-1}p , p \rangle
= \langle (\sigma\sigma^T)^{-1} p, p\rangle = \langle A^{-1} p , p\rangle \leq |A^{-1} p|.
\end{eqnarray*}
From \eqref{ellipticite1}, we have $\langle Ap , p \rangle \geq \rho |p|^2 \Rightarrow \langle AA^{-1} p , A^{-1} p \rangle\geq \rho|A^{-1}p|^2 \Rightarrow \langle p , A^{-1}p \rangle \geq \rho|A^{-1}p|^2.$ This implies $|A^{-1}p| \leq \frac{1}{\rho}$, so $|\langle p,Q_1\rangle | \geq 2 \sqrt{\rho}$ and that $|Q_1| - |\langle p,Q_1\rangle | = 0.$
Moreover,
\begin{eqnarray*}
&&\langle p\otimes D\phi(x)\sigma_x e_1, Q_1\rangle = \frac{1}{|\sigma_x^{-1}p|} [ \frac{1}{|\sigma_x^{-1}p|} +  \frac{1}{|\sigma_y^{-1}p|}]\langle D\phi(x) ,p \rangle\\
&&\langle D\phi(x)\otimes p Q_1, \sigma_x e_1 \rangle = \frac{1}{|\sigma_x^{-1}p|} [ \frac{1}{|\sigma_x^{-1}p|} +  \frac{1}{|\sigma_y^{-1}p|}]\langle D\phi(x) ,p \rangle
\\
&&\langle p\otimes D\phi(y)\sigma_y \tilde{e}_1 , Q_1 \rangle = - \frac{1}{|\sigma_y^{-1}p|} [ \frac{1}{|\sigma_x^{-1}p|} +  \frac{1}{|\sigma_y^{-1}p|}]\langle D\phi(y) ,p \rangle
\\
&&\langle D\phi(y)\otimes p Q_1, \sigma_y \tilde{e}_1 \rangle = - \frac{1}{|\sigma_y^{-1}p|} [ \frac{1}{|\sigma_x^{-1}p|} +  \frac{1}{|\sigma_y^{-1}p|}]\langle D\phi(y) ,p \rangle.
\end{eqnarray*}
Since 
$
C_\sigma|\sigma_x^{-1}p| \geq |\sigma_x||\sigma_x^{-1}p| =  1,$ we infer $ \frac{1}{|\sigma_x^{-1}p|} \leq C_\sigma.
$
Therefore,
$\mathcal{P}_1  \leq 4 C_\sigma^2 (|D\phi(x)|+|D\phi(y)|).
$ Next, using the concavity of $\psi$ and the above estimates, we obtain
\begin{eqnarray*}
&& T \leq 4\rho \psi'' \Phi + 4C_1C_\sigma^2\psi' (|D\phi(x)|+|D\phi(y)|) +  \sum_{i=2}^N
\frac{\psi'  \Phi}{|x-y|} |Q_i|^2  + C_1\psi'\mathcal{P}_i + O(\varrho)
\nonumber.
\end{eqnarray*}

For $i = 2$,
we compute
\begin{eqnarray}\label{Q2}
&& Q_2 = \sigma_x e_2 - \sigma_y \tilde{e}_2 = \sigma_x \mathcal{R} e_1 +\sigma_y \mathcal{R} \tilde{e}_1 = (\sigma_x - \sigma_y)\mathcal{R}e_1 + \sigma_y (\mathcal{R}e_1 + \mathcal{R} \tilde{e}_1).
\end{eqnarray}
$\bullet$ $|x-y| \leq 1$, we use the fact that $\sigma$ is lipschitz in \eqref{Q2} to obtain
\begin{eqnarray*}
|Q_2| \leq L_\sigma |x-y| + C_\sigma |\mathcal{R}e_1 + \mathcal{R} \tilde{e}_1| = L_\sigma |x-y| + C_\sigma |e_1 + \tilde{e}_1|.
\end{eqnarray*}
Moreover,
\begin{eqnarray*}
|e_1 + \tilde{e}_1| 
&\leq& \frac{1}{|\sigma_x^{-1}p|}|\sigma_x^{-1}p - \sigma_y^{-1}p| + |\sigma_y^{-1}p||\frac{1}{|\sigma_x^{-1}p|} - \frac{1}{|\sigma_y^{-1}p|}| \nonumber \\
&\leq& \frac{2}{|\sigma_x^{-1}p|}|\sigma_x^{-1} - \sigma_y^{-1}|
= \frac{2}{|\sigma_x^{-1}p|} |\sigma_y^{-1}[\sigma_y - \sigma_x]\sigma_x^{-1}|.
\end{eqnarray*}
Then,
\begin{eqnarray*}
&& |Q_2| \leq L_\sigma |x-y| + \frac{2{C_\sigma}L_\sigma |x-y|}{\sqrt{\rho}}=   (1+\frac{2{C_\sigma}}{\sqrt{\rho}} )L_\sigma|x-y|.
\end{eqnarray*}
$\bullet$ $|x-y| \geq 1$, by using the property that $\sigma$ is bounded in \eqref{Q2}, we get
\begin{eqnarray*}
|Q_2| \leq |\sigma_x| + |\sigma_y| + 2|\sigma_y| \leq 4C_\sigma.
\end{eqnarray*}
Therefore, for all $|x-y|>0,$ $|Q_2| \leq \max \lbrace (1+\frac{2{C_\sigma}}{\sqrt{\rho}} )L_\sigma,4C_\sigma\rbrace\min\{1,|x-y|\}.
$
On the other hand,
\begin{eqnarray*}
\mathcal{P}_2
&\leq&  |Q_2| [ |\sigma_x|(|p\otimes D\phi(x)| + |D\phi(x)\otimes p|) 
 + |\sigma_y|(|p\otimes D\phi(y)| + |D\phi(y)\otimes p|) ] \\
&\leq&  2C_\sigma L_\sigma(1+\frac{2{C_\sigma}}{\sqrt{\rho}} )|x-y|(|D\phi(x)|+|D\phi(y)|).
\end{eqnarray*}
Set $\hat{C}_\sigma : = \max \lbrace {\max}^2 \lbrace (1+\frac{2{C_\sigma}}{\sqrt{\rho}} )L_\sigma,4C_\sigma\rbrace,   2C_\sigma L_\sigma(1+\frac{2{C_\sigma}}{\sqrt{\rho}} ) \rbrace.
$
Hence, we obtain:
\begin{eqnarray*}
T &\leq& 4\rho \psi'' \Phi + 4C_1C_\sigma^2\psi' (|D\phi(x)|+|D\phi(y)|)  + \hat{C}_\sigma \psi'\Phi \\
&& +C_1\hat{C}_\sigma |x-y| \psi'(|D\phi(x)|+|D\phi(y)|)+  \sum_{i=3}^N
\frac{\psi'  \Phi}{|x-y|} |Q_i|^2 + C_1\psi'\mathcal{P}_i + O(\varrho)
\nonumber.
\end{eqnarray*}

Recall that $e_i = \tilde{e}_i  , \ 3 \leq i \leq N$, similarly with the above estimates we get
\begin{eqnarray*}
&& |Q_i| \leq {\max}\{L_{\sigma}, 2C_{\sigma}\}\min\{1, |x-y|\}~~
\text{ and }
 \mathcal{P}_i
\leq 2C_\sigma L_\sigma |x-y|(|D\phi(x)|+|D\phi(y)|).
\end{eqnarray*}

Finally, combining all the above estimates, we obtain
\begin{eqnarray}\label{Tr2}
T &\leq& 4 \rho \psi'' \Phi + 4C_1{C_\sigma}^2\psi'(|D\phi(x)|+|D\phi(y)|) + \hat{C}_\sigma \psi'\Phi  \\
& & + C_1\hat{C}_\sigma |x-y|\psi'(|D\phi(x)|+|D\phi(y)|) + (N-2){\max}^2\{L_{\sigma}, 2C_{\sigma}\}\psi' \Phi \nonumber \\
&& + (N-2)2C_1C_\sigma L_\sigma|x-y|\psi'(|D\phi(x)|+|D\phi(y)|)\nonumber\\
&\leq& 4 \rho \psi'' \Phi + 4C_1{C_{\sigma}^2}\psi'(|D\phi(x)|+|D\phi(y)|) + C_{\sigma1} \psi' \Phi \nonumber\\
&  & + C_1C_{\sigma2}|x-y|\psi' (|D\phi(x)|+|D\phi(y)|),\nonumber
\end{eqnarray}
where $C_{\sigma1} = \hat{C}_\sigma + (N-2){\max}^2\{L_{\sigma}, 2C_{\sigma}\};$  $C_{\sigma2} = \hat{C}_\sigma + (N-2)2C_\sigma L_\sigma.$
\smallskip

\noindent{\it 3. Conclusion.} Using the concavity of $\psi$ and choose
\begin{eqnarray}\label{Csigma}
&& \tilde{\tilde{C}}_{\sigma}(N,\rho,\sigma) := \max\{C_{\sigma1}, C_{\sigma2},4C_{\sigma}^2\},\qquad \mathcal{C}_\sigma = \max\{ \tilde{C}_\sigma, \tilde{\tilde{C}}_{\sigma} \},
\end{eqnarray}
where $\tilde{C}_\sigma$ is defined by \eqref{Ctilde}. Then from \eqref{Tr1} and \eqref{Tr2} we get the conclusion.
\end{proof}

\subsection{Estimates for the nonlocal operator}\label{sec-nonlocal}

\begin{proof}[\textbf{Proof of Proposition \ref{nlocalgen}.}]

Several parts of the proof are inspired by \cite{bcci12} and adapted to our unbounded framework. 
\smallskip

\noindent{\it 1. Proof of (i).} We split the domain of integration into two pieces, on the unit ball $B$ and its complement $B^c$. 

Let $(x,y)$ be a maximum point of $\Psi(\cdot,\cdot)$, we have 
\begin{eqnarray*}
 u(x+z) - u(y + z) -(u(x) - u(y ))\leq  C_1(\psi(|a|) + \delta)[\phi(x+z)-\phi(x) + \phi(y +z)  - \phi(y )].
\end{eqnarray*}
Taking the integral over $B^c$, we first get
\begin{eqnarray}\label{t1}
&&\mathcal{I}[B^c](x,u,D_x\varphi)-\mathcal{I}[B^c](y,u,-D_y\varphi) \\
&\leq& C_1(\psi(|a|) + \delta) \left(\mathcal{I}[B^c](x,\phi,D\phi) + \mathcal{I}[B^c](y,\phi,D\phi) \right).\nonumber
\end{eqnarray}

Moreover, at the maximum point, we have
\begin{eqnarray}\label{estu}
&& u(x + z) - u(x) - \langle D_x \varphi(x,y), z\rangle \\
&\leq&  u(y + z') - u(y)+ \langle D_y \varphi(x,y), z'\rangle + \varphi(x + z,y + z') - \varphi(x, y)\nonumber\\
& & + \langle D_y \varphi(x,y),z-z'\rangle - \langle D_x \varphi(x,y) + D_y \varphi(x,y), z\rangle\nonumber,
\end{eqnarray}
where $D_x\varphi$ and $D_y\varphi$ are given by \eqref{Dxphi}. Taking $z' = z$ in \eqref{estu} and using \eqref{Dxphi}, we have
\begin{eqnarray*}\label{estimatet2}
&&u(x+z) - u(x) - \langle D_x\varphi(x,y),z\rangle -(u(y+z) - u(y) +\langle D_y\varphi(x,y), z\rangle)\\
&\leq&  \varphi(x+z,y+z) - \varphi(x,y) -\langle D_x\varphi(x,y) + D_y\varphi(x,y), z\rangle\nonumber\\
&\leq& C_1(\psi(|a|) + \delta)[\phi(x+z) - \phi(x) - \langle D\phi(x), z\rangle + \phi(y+z) - \phi(y) - \langle D\phi(y), z \rangle].\nonumber
\end{eqnarray*}
Then, taking the integral over the ball we get
\begin{eqnarray}\label{t2}
&& \mathcal{I}[B](x,u,D_x\varphi)-\mathcal{I}[B](y,u,-D_y\varphi) 
\\
& \leq&  C_1(\psi(|a|) + \delta)  \left(\mathcal{I}[B ](x,\phi,D\phi) + \mathcal{I}[B](y,\phi,D\phi) \right).\nonumber
\end{eqnarray}

Therefore, from \eqref{t1} and \eqref{t2} we obtain
\begin{eqnarray*}
&& \mathcal{I}(x,u,D_x\varphi) - \mathcal{I}(y,u, -D_y\varphi) 
\leq C_1(\psi(|a|) + \delta) \{\mathcal{I}(x,\phi,D\phi)  + \mathcal{I}(y,\phi,D\phi)\}.
\end{eqnarray*}

\noindent{\it 2. Proof of (ii).} In this case, we split the domain of integration into three pieces,
$$
\mathcal{T}(x,y):=\mathcal{I}(x,u,D_x\varphi) - \mathcal{I}(y,u,-D_y\varphi) = \mathcal{T}^1(x,y) + \mathcal{T}^2(x,y) + \mathcal{T}^3(x,y),
$$
where $\mathcal{T}^1,$ $ \mathcal{T}^2,$ $\mathcal{T}^3$ are the difference of the nonlocal terms
over the domains $B^c,$ $B \setminus \mathcal{C}_{\eta, \gamma}(a),$ $\mathcal{C}_{\eta, \gamma}(a)$ respectively.

We argue as in the proof of (i) to first get
\begin{eqnarray}\label{t1t2}
&& \mathcal{T}^1(x,y) + \mathcal{T}^2(x,y) 
\leq  C_1(\psi(|a|) + \delta) \{\mathcal{I}[\mathcal{C}^c_{\eta, \gamma}(a)](x,\phi,D\phi)  + \mathcal{I}[\mathcal{C}^c_{\eta, \gamma}(a)](y,\phi,D\phi)\}.
\end{eqnarray}

Now the rest of the proof is only to estimate for $\mathcal{T}^3(x,y)$.
Taking $z' = 0$ and $z = 0$ in the inequality \eqref{estu} we get
\begin{eqnarray}\label{T3}
 \mathcal{T}^3(x,y) &\leq& \int_{\mathcal{C}_{\eta, \gamma}(a)} [ \varphi^1(x,y,z) + \varphi^2(x,y,z)] \nu(dz),
\end{eqnarray}
where
\begin{eqnarray*}
&& \varphi^1(x,y,z) = \varphi(x + z, y) - \varphi(x,y) -\langle D_x \varphi(x,y), z\rangle,\\
&&  \varphi^2(x,y,z) = \varphi(x , y+z) - \varphi(x,y) -\langle D_y \varphi(x,y), z\rangle.
\end{eqnarray*}
Let $\hat{a} = (x-y)/|x-y|.$ From \eqref{varphi} and \eqref{Dxphi} we have
\begin{eqnarray}\label{var1}
\varphi^1
&=& (\psi(|a+z|) + \delta)[ \Phi(x+z,y) - \Phi(x,y)] - C_1(\psi(|a|) + \delta)\langle D\phi(x), z\rangle\\
&& + [\psi(|a+z|)-\psi(|a|)- \psi'(|a|)\langle \hat{a}, z\rangle]\Phi(x,y)\nonumber\\
&=& C_1(\psi(|a+z|) - \psi(|a|))[ \phi(x+z) - \phi(x)] \nonumber\\
&& + C_1(\psi(|a|) + \delta)(\phi(x+z) - \phi(x)  -\langle D\phi(x), z\rangle )\nonumber\\
&& + [\psi(|a+z|)-\psi(|a|)- \psi'(|a|)\langle \hat{a}, z\rangle]\Phi(x,y).\nonumber
\end{eqnarray}
Similarly, we have
\begin{eqnarray}\label{var2}
\varphi^2
&=& C_1(\psi(|a-z|) - \psi(|a|))[ \phi(y+z) - \phi(y)] \\
&& + C_1(\psi(|a|) + \delta)(\phi(y+z) - \phi(y) -\langle D\phi(y), z\rangle )\nonumber\\
&& + [\psi(|a-z|)-\psi(|a|)+ \psi'(|a|)\langle \hat{a}, z\rangle]\Phi(x,y).\nonumber
\end{eqnarray}
Then from \eqref{T3}, \eqref{var1} and \eqref{var2}, we obtain
\begin{eqnarray}\label{T3-general}
 \mathcal{T}^3(x,y) &\leq&  C_1  \int_{\mathcal{C}_{\eta, \gamma}(a)} \{(\psi(|a+z|) - \psi(|a|))[ \phi(x+z) - \phi(x)]\\
 && \hspace*{2cm} + (\psi(|a-z|) - \psi(|a|))[ \phi(y+z) - \phi(y)]\}\nu(dz)\nonumber\\
 && + C_1(\psi(|a|) + \delta)(\mathcal{I}[\mathcal{C}_{\eta, \gamma}(a)](x,\phi,D\phi) + \mathcal{I}[\mathcal{C}_{\eta, \gamma}(a)](y,\phi,D\phi))\nonumber\\
 && + \Phi(x,y) \int_{\mathcal{C}_{\eta, \gamma}(a)} \{ \psi(|a+z|)-\psi(|a|)- \psi'(|a|)\langle \hat{a}, z\rangle 
\nonumber \\
 &&\hspace*{3cm} +  \psi(|a-z|)-\psi(|a|)+ \psi'(|a|)\langle \hat{a}, z\rangle \} \nu(dz).\nonumber
\end{eqnarray}
Because of the monotonicity and the concavity of $\psi$ we have
\begin{eqnarray}\label{T3psi}
\psi(|a+z|) - \psi(|a|) \leq \psi(|a| + |z|) - \psi(|a|) 
\leq \psi'(|a|)|z|.
\end{eqnarray}
Since $\phi\in C^\infty (\R^N)$ is a convex function and recalling \eqref{d1d2phi} and using \eqref{phi-product} we have 
\begin{eqnarray}\label{T3phix}
&&\phi(x+z) - \phi(x) \leq |D\phi(x+z)||z| \leq \mu \phi(x+z)|z| \leq  \mu \phi(x)\phi(z)|z|, ~\forall x,z \in \R^N.
\end{eqnarray}
Using \eqref{T3psi} and \eqref{T3phix} to estimate for \eqref{T3-general} we obtain
\begin{eqnarray*}
\mathcal{T}^3(x,y) & \leq  &C_1 \mu \psi'(|a|)(\phi(x) + \phi(y))\int_{\mathcal{C}_{\eta, \gamma}(a)} \phi(z)|z|^2 \nu(dz) \\
&&  + C_1(\psi(|a|) + \delta)(\mathcal{I}[\mathcal{C}_{\eta, \gamma}(a)](x,\phi,D\phi) + \mathcal{I}[\mathcal{C}_{\eta, \gamma}(a)](y,\phi,D\phi))\\
 && + \Phi(x,y) \int_{\mathcal{C}_{\eta, \gamma}(a)} [ \psi(|a+z|)-\psi(|a|)- \psi'(|a|)\langle \hat{a}, z\rangle 
 \\
 &&\qquad \qquad \qquad +  \psi(|a-z|)-\psi(|a|)+ \psi'(|a|)\langle \hat{a}, z\rangle ] \nu(dz).
\end{eqnarray*}
Thanks to Assumption~\eqref{M2-assumpt}, we can readily
apply \cite[Lemma 12]{bcci12} to estimate the last integral and we finally obtain
that there exists $0< \eta <1$ such that, for all $\gamma>0,$
\begin{eqnarray} \label{t3}
\mathcal{T}^3 &\leq& C_1 \mu \psi'(|a|)(\phi(x) + \phi(y))\int_{\mathcal{C}_{\eta, \gamma}(a)} \phi(z)|z|^2 \nu(dz) \\
&&  + C_1(\psi(|a|) + \delta)(\mathcal{I}[\mathcal{C}_{\eta, \gamma}(a)](x,\phi,D\phi) + \mathcal{I}[\mathcal{C}_{\eta, \gamma}(a)](y,\phi,D\phi))\nonumber\\
 && + \frac{1}{2}\Phi(x,y)  \int_{\mathcal{C}_{\eta, \gamma}(a)} \sup_{|s| \leq 1} l(a,s,z) |z|^2\nu(dz),\nonumber
\end{eqnarray}
where
\begin{eqnarray}\label{la}
l(a,s,z) = (1- \tilde{\eta}^2)\frac{\psi'(|a+sz|)}{|a+sz|} + \tilde{\eta}^2 \psi''(|a+sz|)
\end{eqnarray}
and $\tilde{\eta} = \frac{1 - \eta - \gamma_0}{1+\gamma_0},$ $\gamma = \gamma_0|a|$ with $\gamma_0 \in (0,1)$.
Notice that, if Assumption~\eqref{M2-assumpt} holds for $\eta,$ then it also holds for smaller $\eta$,
so we can choose $\eta$ as small as we want. Moreover, using $\eqref{M1}$  we get
\begin{eqnarray}\label{holder-bound}
&& C_1 \mu \psi'(|a|)(\phi(x) + \phi(y))\int_{\mathcal{C}_{\eta, \gamma}(a)} \phi(z)|z|^2 \nu(dz) 
\leq \mu C^1_\nu\psi'(|a|) \Phi(x,y).
\end{eqnarray}

Finally, from \eqref{t1t2}, \eqref{t3} and \eqref{holder-bound} we obtain
\begin{eqnarray*}\label{itotal}
\mathcal{T} &\leq& C_1(\psi(|a|) + \delta)\left(\mathcal{I}(x,\phi,D\phi) + \mathcal{I}(y,\phi,D\phi) \right)+ \mu C^1_\nu\psi'(|a|) \Phi(x,y)\\
&& + \frac{1}{2}\Phi(x,y)  \int_{\mathcal{C}_{\eta, \gamma}(a)} \sup_{|s| \leq 1} l(a,s,z)|z|^2\nu(dz).
\end{eqnarray*}
\end{proof}


\begin{proof}[\textbf{Proof of Lemma \ref{Liplem}}]
Let $a_0 > 0$, $|x-y|=|a| \leq a_0$. From Proposition \ref{nlocalgen} \eqref{ellnon}, we have
\begin{eqnarray*}
&&\mathcal{I}(x,u,D_x\varphi) - \mathcal{I}(y,u,-D_y\varphi)\\
&\leq&  C_1(\psi(|a|) + \delta)\left(\mathcal{I}(x,\phi,D\phi) + \mathcal{I}(y,\phi,D\phi) \right)+ \mu C^1_\nu\psi'(|a|) \Phi(x,y)\nonumber\\
&& + \frac{1}{2}\Phi(x,y)  \int_{\mathcal{C}_{\eta, \gamma}(a)} \sup_{|s| \leq 1} l(a,s,z)|z|^2\nu(dz),\nonumber
\end{eqnarray*}
where $l(a,s,z)$ is given by \eqref{la}. 

Let $\psi$ be defined in \eqref{psinon}, take $ a_0=  r_0$. It follows from \cite[Corollary 9]{bcci12} that there exists a constant $C = C(\nu)>0$ such that for $\Lambda(\nu) = C(\varrho\theta 2^{\theta-1}-1)> 0$ we have
\begin{eqnarray*}\label{lip-bound}
\frac{1}{2}\int_{\mathcal{C}_{\eta, \gamma}(a)} \sup_{|s| \leq 1} l(a,s,z)|z|^2\nu(dz)
\leq -\Lambda|a|^{-\tilde{\theta}},
\end{eqnarray*}
where $\tilde{\theta} = \beta-1 - \theta(N+2-\beta)>0$. 

Therefore, we obtain
\begin{eqnarray}
 \mathcal{I}(x,u,D_x\varphi) - \mathcal{I}(y,u,-D_y\varphi) 
&\leq& C_1(\psi(|a|) + \delta)\left(\mathcal{I}(x,\phi,D\phi) + \mathcal{I}(y,\phi,D\phi) \right)
\nonumber\\
&&
 - \left(\Lambda |a|^{-\tilde{\theta}} - \mu C_\nu^1 \psi'(|a|) \right)\Phi(x,y).\nonumber
\end{eqnarray}
\end{proof}

\begin{proof}[\textbf{Proof of Lemma \ref{holder-lem}.}]
Let $a_0 > 0$, $|x-y|=|a| \leq a_0$. From Proposition \ref{nlocalgen} \eqref{ellnon} we have
\begin{eqnarray}\label{holder-estimates}
 &&\mathcal{I}(x,u,D_x\varphi) - \mathcal{I}(y,u,-D_y\varphi)\\
&\leq& C_1(\psi(|a|) + \delta)\left(\mathcal{I}(x,\phi,D\phi) + \mathcal{I}(y,\phi,D\phi) \right)
\nonumber\\
&& + \left(\mu C^1_\nu\psi'(|a|) +  \frac{1}{2} \int_{\mathcal{C}_{\eta, \gamma}(a)} \sup_{|s| \leq 1} l(a,s,z)|z|^2\nu(dz)\right)\Phi(x,y),\nonumber
\end{eqnarray}
where $l(a,s,z)$ is given by \eqref{la}. We only need to estimate this terms and then integrate over the cone.

Let $r=|x-y|,$ $\psi(r) = 1 - e^{-C_2r^\tau}$ for $r \leq r_0$, $\tau \in (0,1)$, we have the derivatives
\begin{eqnarray*}
&& \psi'(r) = C_2\tau r^{\tau -1}e^{-C_2r^\tau},~~
\psi''(r)= C_2\tau (\tau -1)r^{\tau -2}e^{-C_2r^\tau} - (C_2\tau r^{\tau -1})^2e^{-C_2r^\tau}.
\end{eqnarray*}
Hence, we have
\begin{eqnarray*}
l(a,s,z)& =& (1- \tilde{\eta}^2) C_2 \tau e^{-C_2|a+sz|^\tau}|a+sz|^{\tau-2} + \tilde{\eta}^2 C_2 \tau (\tau -1) \tau e^{-C_2|a+sz|^\tau}|a+sz|^{\tau-2}\\
&& \qquad  - \tilde{\eta}^2 (C_2 \tau)^2 e^{-C_2|a+sz|^\tau} |a+sz|^{2(\tau-1)}\\
&=&  C_2 \tau e^{-C_2|a+sz|^\tau}|a+sz|^{\tau-2} \left( 1 -  \tilde{\eta}^2 (2 - \tau)\right)  - \tilde{\eta}^2 (C_2 \tau)^2 e^{-C_2|a+sz|^\tau} |a+sz|^{2(\tau-1)}.
\end{eqnarray*}
Note that, on the set $\mathcal{C}_{\eta,\gamma}(|a|),$ we have the following upper bound
$$|a + sz| \leq |a| + |s||z| \leq |a| + \gamma = |a|(1+\gamma_0).$$
Taking $\tau \in (0,1)$ (possibly arbitrary close to 1) then $ 2 - \tau > 1$. So we can choose $\eta$ and $\gamma_0$ sufficiently small enough such that 
\begin{eqnarray*}
&&(2-\tau)\tilde{\eta}^2 = (2 - \tau)\left(\frac{1-\eta - \gamma_0}{1+\gamma_0}\right)^2 > \theta >1 ~~ \text{for some } \theta \in (1,2-\tau).\end{eqnarray*}
Therefore we obtain
\begin{eqnarray*}
l(a,s,z)&\leq& -  C_2\tau (\theta - 1)  e^{-C_2|a+sz|^\tau}|a+sz|^{\tau-2} -  (C_2 \tau)^2 \frac{\theta}{2 - \tau}e^{-C_2|a+sz|^\tau} |a+sz|^{2(\tau-1)}\\
&\leq& -  C_2 \tau (\theta - 1) e^{-C_2|a|^\tau}e^{-C_2|z|^\tau} (1+\gamma_0)^{\tau-2}|a|^{\tau -2} \\
 && \qquad -  (C_2 \tau)^2 \frac{\theta}{2 - \tau}e^{-C_2|a|^\tau}e^{-C_2|z|^\theta} (1+\gamma_0)^{2(\tau-1)}|a|^{2(\tau-1)}\\
&=& - \psi'(|a|) \left( C^1(\nu,\tau)|a|^{-1} + C_2 C^2(\nu,\tau)|a|^{\tau -1} \right) e^{-C_2|z|^\tau}.
\end{eqnarray*}
Remark that $\eta, \gamma_0$ do not depend on $|a|$. Taking the integral for $l(a,s,z)$ over the cone $\mathcal{C}_{\eta,\gamma}(a)$ for $\gamma = \gamma_0 |a|$ and using $\eqref{M2-assumpt}$ we obtain
\begin{eqnarray}\label{holder-ell}
&&\frac{1}{2} \int_{\mathcal{C}_{\eta, \gamma}(a)} \sup_{|s| \leq 1} l(a,s,z)|z|^2 \nu(dz)\\
&\leq& - \frac{1}{2}\psi'(|a|) \left( C^1(\nu,\tau)|a|^{-1} + C_2 C^2(\nu,\tau)|a|^{\tau -1} \right)\int_{\mathcal{C}_{\eta, \gamma}(a)}e^{-C_2|z|^\tau}|z|^2 \nu(dz) \nonumber\\
&\leq& - C(\nu,\tau) \psi'(|a|) (|a|^{-1} +C_2 |a|^{\tau -1})|a|^{2-\beta}\nonumber\\
&=& - C(\nu,\tau) \psi'(|a|)(1+ C_2 |a|^\tau)|a|^{1-\beta},\nonumber
\end{eqnarray}
where $C(\nu,\tau) = \min\{C^1(\nu, \tau), C^2(\nu,\tau)\}C^2_\nu$, $C^2_\nu$ is in $\eqref{M2-assumpt}$.

Therefore, from \eqref{holder-estimates} and \eqref{holder-ell} we obtain
\begin{eqnarray*}
 \mathcal{I}(x,u,D_x\varphi) - \mathcal{I}(y,u,-D_y\varphi)
&\leq& C_1(\psi(r) + \delta)\left(\mathcal{I}(x,\phi,D\phi) + \mathcal{I}(y,\phi,D\phi) \right)
\nonumber\\
&& + \Phi(x,y)\left( \mu C^1_\nu - C(\nu,\tau)(1 + C_2 r^\tau)r^{1-\beta} \right) \psi'(r).
\end{eqnarray*}
\end{proof}


\begin{thebibliography}{10}

\bibitem{at96}
Olivier Alvarez and Agn{\`e}s Tourin.
\newblock Viscosity solutions of nonlinear integro-differential equations.
\newblock {\em Ann. Inst. H. Poincar\'e Anal. Non Lin\'eaire}, 13(3):293--317,
  1996.

\bibitem{bcg15}
Martino Bardi, Annalisa Cesaroni, and Daria Ghilli.
\newblock Large deviations for some fast stochastic volatility models by
  viscosity methods.
\newblock {\em Discrete Contin. Dyn. Syst.}, 35(9):3965--3988, 2015.

\bibitem{barles91b}
G.~Barles.
\newblock Interior gradient bounds for the mean curvature equation by viscosity
  solutions methods.
\newblock {\em Differential Integral Equations}, 4(2):263--275, 1991.

\bibitem{barles91a}
G.~Barles.
\newblock A weak {B}ernstein method for fully nonlinear elliptic equations.
\newblock {\em Differential Integral Equations}, 4(2):241--262, 1991.

\bibitem{bbl02}
G.~Barles, S.~Biton, and O.~Ley.
\newblock A geometrical approach to the study of unbounded solutions of
  quasilinear parabolic equations.
\newblock {\em Arch. Ration. Mech. Anal.}, 162(4):287--325, 2002.

\bibitem{bs01}
G.~Barles and P.~E. Souganidis.
\newblock Space-time periodic solutions and long-time behavior of solutions to
  quasi-linear parabolic equations.
\newblock {\em SIAM J. Math. Anal.}, 32(6):1311--1323 (electronic), 2001.

\bibitem{bcci12}
Guy Barles, Emmanuel Chasseigne, Adina Ciomaga, and Cyril Imbert.
\newblock Lipschitz regularity of solutions for mixed integro-differential
  equations.
\newblock {\em J. Differential Equations}, 252(11):6012--6060, 2012.

\bibitem{bcci14}
Guy Barles, Emmanuel Chasseigne, Adina Ciomaga, and Cyril Imbert.
\newblock Large time behavior of periodic viscosity solutions for uniformly
  parabolic integro-differential equations.
\newblock {\em Calc. Var. Partial Differential Equations}, 50(1-2):283--304,
  2014.

\bibitem{bi08}
Guy Barles and Cyril Imbert.
\newblock Second-order elliptic integro-differential equations: viscosity
  solutions' theory revisited.
\newblock {\em Ann. Inst. H. Poincar\'e Anal. Non Lin\'eaire}, 25(3):567--585,
  2008.

\bibitem{bklt15}
Guy Barles, Shigeaki Koike, Olivier Ley, and Erwin Topp.
\newblock Regularity results and large time behavior for integro-differential
  equations with coercive {H}amiltonians.
\newblock {\em Calc. Var. Partial Differential Equations}, 54(1):539--572,
  2015.

\bibitem{blt17}
Guy Barles, Olivier Ley, and Erwin Topp.
\newblock Lipschitz regularity for integro-differential equations with coercive
  {H}amiltonians and applications to large time behavior.
\newblock {\em Nonlinearity}, 30:703--734, 2017.

\bibitem{bt16b}
Guy Barles and Erwin Topp.
\newblock Lipschitz regularity for censored subdiffusive integro-differential
  equations with superfractional gradient terms.
\newblock {\em Nonlinear Anal.}, 131:3--31, 2016.

\bibitem{clp10}
I.~Capuzzo~Dolcetta, F.~Leoni, and A.~Porretta.
\newblock H\"older estimates for degenerate elliptic equations with coercive
  {H}amiltonians.
\newblock {\em Trans. Amer. Math. Soc.}, 362(9):4511--4536, 2010.

\bibitem{cil92}
M.~G. Crandall, H.~Ishii, and P.-L. Lions.
\newblock User's guide to viscosity solutions of second order partial
  differential equations.
\newblock {\em Bull. Amer. Math. Soc. (N.S.)}, 27(1):1--67, 1992.

\bibitem{dpv12}
Eleonora Di~Nezza, Giampiero Palatucci, and Enrico Valdinoci.
\newblock Hitchhiker's guide to the fractional {S}obolev spaces.
\newblock {\em Bull. Sci. Math.}, 136(5):521--573, 2012.

\bibitem{fs93}
W.~H. Fleming and H.~M. Soner.
\newblock {\em Controlled {M}arkov processes and viscosity solutions}.
\newblock Springer-Verlag, New York, 1993.

\bibitem{fil06b}
Y.~Fujita, H.~Ishii, and P.~Loreti.
\newblock Asymptotic solutions of {H}amilton-{J}acobi equations in {E}uclidean
  {$n$} space.
\newblock {\em Indiana Univ. Math. J.}, 55(5):1671--1700, 2006.

\bibitem{fil06}
Y.~Fujita, H.~Ishii, and P.~Loreti.
\newblock Asymptotic solutions of viscous {H}amilton-{J}acobi equations with
  {O}rnstein-{U}hlenbeck operator.
\newblock {\em Comm. Partial Differential Equations}, 31(4-6):827--848, 2006.

\bibitem{fl09}
Y.~Fujita and P.~Loreti.
\newblock Long-time behavior of solutions to {H}amilton-{J}acobi equations with
  quadratic gradient term.
\newblock {\em NoDEA Nonlinear Differential Equations Appl.}, 16(6):771--791,
  2009.

\bibitem{ghilli16}
Daria Ghilli.
\newblock Viscosity methods for large deviations estimates of multiscale
  stochastic processes.
\newblock {\em Submitted}, 2016.

\bibitem{gt83}
D.~Gilbarg and N.~S. Trudinger.
\newblock {\em Elliptic partial differential equations of second order}.
\newblock Springer-Verlag, Berlin, second edition, 1983.

\bibitem{il90}
H.~Ishii and P.-L. Lions.
\newblock Viscosity solutions of fully nonlinear second-order elliptic partial
  differential equations.
\newblock {\em J. Differential Equations}, 83(1):26--78, 1990.

\bibitem{koike04}
S.~Koike.
\newblock {\em A beginner's guide to the theory of viscosity solutions},
  volume~13 of {\em MSJ Memoirs}.
\newblock Mathematical Society of Japan, Tokyo, 2004.

\bibitem{ln16}
O.~Ley and V.~D. Nguyen.
\newblock Gradient bounds for nonlinear degenerate parabolic equations and
  application to large time behavior of systems.
\newblock {\em Nonlinear Anal.}, 130:76--101, 2016.

\bibitem{ln17}
O.~Ley and V.~D. Nguyen.
\newblock Lipschitz regularity results for nonlinear strictly elliptic
  equations and applications.
\newblock {\em Submitted}, 2016.

\bibitem{lions82}
P.-L. Lions.
\newblock {\em Generalized solutions of {H}amilton-{J}acobi equations}.
\newblock Pitman (Advanced Publishing Program), Boston, Mass., 1982.

\bibitem{ls05}
P.-L. Lions and P.~E. Souganidis.
\newblock Homogenization of degenerate second-order {PDE} in periodic and
  almost periodic environments and applications.
\newblock {\em Ann. Inst. H. Poincar\'e Anal. Non Lin\'eaire}, 22(5):667--677,
  2005.

\bibitem{nguyen17}
T.~T. Nguyen.
\newblock Large time behavior of solutions of local and nonlocal nondegenerate
  {H}amilton-{J}acobi equations with {O}rnstein-{U}hlenbeck operator.
\newblock {\em In preparation}, 2017.

\end{thebibliography}

\end{document}